\newtheorem{conjecture}{Conjecture}[section]
\newtheorem{proposition}{Proposition}[section]
\newtheorem{lemma}{Lemma}[section]
\theoremstyle{definition}
\newtheorem{remark}{Remark}[section]
\newcommand{\me}{\mathrm{e}}
\newcommand{\mi}{\mathrm{i}}
\begin{document}
%\fontsize{10pt}{11pt}\selectfont
%\fontencoding{T1}\selectfont
%\baselineskip=14pt
%\frenchspacing
\title{One-Parameter Meromorphic Solution of the Degenerate Third Painlev\'{e} Equation
with Formal Monodromy Parameter $a=\pm\mi/2$ Vanishing at the Origin}
\author{A.~V.~Kitaev\thanks{\texttt{E-mail: kitaev@pdmi.ras.ru}} \, and \,
A.~Vartanian\\%\thanks{\texttt{E:mail: math.av@cofc.edu}} \\
Steklov Mathematical Institute, Fontanka 27, St. Petersburg 191023, Russia}% \\
%Department of Mathematics, College of Charleston, Charleston, SC 29424, USA}
\date{May 26, 2023}
\maketitle
\begin{abstract}
\noindent
We prove that there exists a one-parameter meromorphic solution $u(\tau)$ vanishing at $\tau=0$ of the degenerate
third Painlev\'e equation,
\begin{equation*}
u^{\prime \prime}(\tau) \! = \! \frac{(u^{\prime}(\tau))^{2}}{u(\tau)} \! - \! \frac{u^{\prime}(\tau)}{\tau}
\! + \! \frac{1}{\tau} \! \left(-8 \varepsilon (u(\tau))^{2} \! + \! 2ab \right) \! + \! \frac{b^{2}}{u(\tau)},\qquad
\varepsilon=\pm1,\quad\varepsilon b>0,
\end{equation*}
for formal monodromy parameter $a=\pm\mi/2$. We study number-theoretic properties of the coefficients of the
Taylor-series expansion of $u(\tau)$ at $\tau=0$ and its asymptotic behaviour as $\tau\to+\infty$. These asymptotics
are visualized for generic initial data.

\vspace{0.30cm}

\textbf{2020 Mathematics Subject Classification.} 33E17, 34M30, 34M35, 34M40,

34M55, 34M56

\vspace{0.23cm}

\textbf{Abbreviated Title.} Meromorphic Solution of the Degenerate Third Painlev\'e

Equation

\vspace{0.23cm}

\textbf{Key Words.} Painlev\'e equation, monodromy data, asymptotics, content of

polynomial
\end{abstract}
%\clearpage
%\tableofcontents
%\listoffigures
\clearpage
\section{Introduction} \label{sec:Introduction}
The degenerate third Painlev\'{e} equation,
\begin{equation} \label{eq:dp3}
u^{\prime\prime}(\tau)=\frac{(u^{\prime}(\tau))^{2}}{u(\tau)}-
\frac{u^{\prime}(\tau)}{\tau}+\frac{1}{\tau} \! \left(-8\varepsilon(u(\tau))^2+2ab\right)+\frac{b^2}{u(\tau)},
\end{equation}
where the prime denotes differentiation with respect to $\tau$, and $\varepsilon,a,b\in\mathbb{C}$ are parameters,
is a special case of the canonical form \cite{Ince} of the third Painlev\'e equation.
The parameter $a\in\mathbb{C}$ is called the formal monodromy, while the parameters $\varepsilon,b\in\mathbb{C}\setminus\{0\}$
can be eliminated via a scaling transformation. Recall that if $\varepsilon b=0$, equation~\eqref{eq:dp3} is solvable in terms
of elementary functions. In problems related with the study of monodromy data and asymptotics, we assume that
$\varepsilon b>0$; otherwise, they are non-vanishing complex numbers.

In the paper \cite{KitSIGMA2019}, we proved that, for all $a\neq0$, there exists a meromorphic solution of equation~\eqref{eq:dp3}
vanishing at $\tau=0$; moreover, this solution is unique, and appears to be an odd function of $\tau$ provided that the following
condition is not valid:
\begin{equation}\label{eq:a-id2+in}
a=\mi/2+\mi n,\qquad
n\in\mathbb{Z}.
\end{equation}
For $a$ satisfying the condition~\eqref{eq:a-id2+in}, there also exists a unique meromorphic solution vanishing at the origin
if, additionally, we require that the solution is an odd function of $\tau$. If, however, we ``break'' the symmetry assumption,
then, for $a$ satisfying the condition~\eqref{eq:a-id2+in}, there is a one-parameter family of meromorphic solutions
vanishing at the origin. Hence, breaking of the symmetry leads to a ``quantization'' of $a$.

In this paper, we study this family of solutions for $a=\pm\mi/2$. The parameter of this family,
denoted by $c_1$, is hidden in the $\tau^2$ term of the Taylor-series expansion of the solution at $\tau=0$, and, thus, breaks
the odd symmetry. The case $c_1=0$, which corresponds to the odd solutions, was studied in \cite{KitSIGMA2019}, among other
symmetric solutions for generic values of $a$; in particular, the monodromy data of the odd solutions are
calculated by appealing to their symmetry property. We recall that the knowledge of the monodromy data allows one to find
large-$\tau$ asymptotics of $u(\tau)$. The standard methodology developed for calculating the monodromy data of general
solutions, that is, the use of Theorems 3.4 and 3.5 from \cite{KitVar2004}, is not directly applicable in this situation
because two parameters specifying the general solution $u(\tau)$ are contained in its leading term while the parameter $c_1$
is located in the next-to-the-leading term. In this work (see Section~\ref{sec:monodromy} below), we show how to cope with
this problem without a detailed study of the corresponding monodromy problem.

It was B. I. Suleimanov who drew our attention to the study of meromorphic solutions of equation~\eqref{eq:dp3}.
In \cite{Suleimanov2017}, he discussed a self-focusing phenomenon for solutions of the equations of nonlinear geometric
optics in the presence of small dispersion in one spatial dimension. In fact, his study concerns a special isomonodromy
solution of the focusing Nonlinear Schr\"odinger Equation (NLSE), which is now accepted as a universal model for the
description of various processes in nonlinear physics. Suleimanov showed that, in the neighbourhood of a focusing point,
the leading-order time-behaviour of the wave can be described by the odd solution of equation~\eqref{eq:dp3} for $a=\mi/2$,
i.e., for the case $c_1=0$.
It seems that this special transcendental solution appeared in the literature for the first time in \cite{Suleimanov2017};
therefore, we suggest calling it ``Suleimanov's solution''. The arguments used by Suleimanov are based on the smoothness of
the solution of the NLSE describing the wave in the neighbourhood of the focusing point. We know, however, that in many
transition phenomena studied in physics, the behaviour of the functions describing these phenomena are not smooth, like, say,
for phase transitions in thermodynamics.
Suleimanov's approach shows that, in the context of the NLSE, there appears, quite naturally, a non-canonical form of
equation~\eqref{eq:dp3}:
\begin{equation}\label{eq:dp3Garnier}
\xi^{\prime\prime}_{tt}=\frac{(\xi^{\prime}_t)^2}{\xi}-\frac{\xi^{\prime}}{t}-\frac{\hat{a}^3\xi^2}{4t^2}+\frac{2\mi}{t}+\frac{16}{\xi},
\end{equation}
where $\xi=\xi(t)$, $\hat{a}$ is a parameter, and $t$ has the same meaning as $t$ in the NLSE
(see \cite{Suleimanov2017}, equation (25)). Equation~\eqref{eq:dp3Garnier} was already considered by Garnier~\cite{Garnier1912};
therefore, we call it the Garnier form of equation~\eqref{eq:dp3}. A relation between solutions of equations~\eqref{eq:dp3}
and \eqref{eq:dp3Garnier} reads:
\begin{equation}\label{eq:xi-u}
\xi(t)=\frac{8\tau}{\hat{a}^3}u(\tau),\qquad
t=\tau^2,\qquad
\varepsilon=1.
\end{equation}
Taking into account a relation between the function $\xi(t)$ and solutions of the NLSE (see \cite{Suleimanov2017}, item 8),
we see that if $u(\tau)$ is a holomorphic function vanishing at the origin and $c_1\neq0$, then the corresponding solution of
the NLSE is not smooth at $t=0$. It would be interesting to see whether or not the more general solutions considered in this
paper may be ascribed a physical interpretation within the framework of the paper \cite{Suleimanov2017}.

The main result of Section~\ref{sec:Taylor} is the existence and uniqueness, for a given $c_1\in\mathbb{C}$, of the solution
of equation~\eqref{eq:dp3} that is holomorphic at the origin. The Painlev\'e property implies that this solution is
meromorphic in $\mathbb{C}$. The Taylor-series coefficients of the expansion at the origin are the polynomials $c_m(c_1)$
with rational coefficients.

In Section~\ref{sec:generating-A}, we construct a so-called Super Generating Function, which is a formal expansion for large
values of $c_1$ whose coefficients are generating functions for the coefficients of the polynomials $c_m$. This formal
expansion can be treated as a special \emph{double asymptotics} of the meromorphic solution as $c_1\to\infty$ with the
condition $xc_1^{1/3}\leqslant\mathcal{O}(1)$; we do not, however, develop this aspect here (a similar construction is given
in \cite{KitSIGMA2019}). The first six generating functions are calculated explicitly. The explicit expressions obtained for
the first three generating functions prove Proposition~\ref{prop:degree-c-m} of Section~\ref{sec:Taylor}.

In  Section~\ref{sec:fence}, we study some number-theoretic properties of the coefficients of the polynomials $c_m(c_1)$,
more precisely, the divisibility properties of the \emph{content} of these polynomials. This section has a purely experimental
character, and is based on the calculation of the first $800$ polynomials $c_m(c_1)$. Our calculations show that the coefficients
of these polynomials are positive rational numbers. The content of $c_m(c_1)$ is a natural power of $2$. The results of the
study of the 2-adic valuation of the content of $c_m(c_1)$ is formulated in terms of two plots, which are made separately,
one for polynomials with odd subscripts, and another for polynomials with even subscripts. As a consequence of the properties
of these plots, we call them the odd and even quasi-periodic fences, respectively. The analysis of these fences is undertaken
with the help of the On-Line Encyclopedia of Integer Sequences (OEIS),\footnote{\label{foot:OEIS} https://oeis.org}
which helped us to formulate a hypothesis with explicit formulae for the 2-adic valuation of the content of the
polynomials $c_m(c_1)$.

In Section~\ref{sec:monodromy}, we calculate the monodromy data of the vanishing meromorphic solutions
for $a=\pm\mi/2$.

In Appendix~\ref{app:pictures}, we combine the results for the monodromy data (cf. Section~\ref{sec:monodromy})
with the large-$\tau$ asymptotics for solutions of equation~\eqref{eq:dp3} \cite{KitVar2004,KitVar2010} in order
to visualize the generic behaviour of the meromorphic solutions studied in this paper.
%%%%%%%%%%%%%%%%%%%%%%%%%%%%%%%%%%%%%%%%%%%%%%%%%%%%%%%%%%%%%%%%%%%%%%%%%%%%%%%%%%%%%%
%%%%%%%%%%%%%%%%%%%%%%%%%%%%%%%%%%%%%%%%%%%%%%%%%%%%%%%%%%%%%%%%%%%%%%%%%%%%%%%%%%%%%%
\section{Expansion as $\tau \to 0$} \label{sec:Taylor}
In the study of the small-$\tau$ expansion of solutions to equation~\eqref{eq:dp3} for $a=\pm\mi/2$, it is convenient to
adopt the following normalization for the coefficients and the solution:
\begin{equation} \label{eqs:y-x-b=a}
b=a=\kappa\,\mi/2,\quad\quad
\kappa=\pm1,\quad\quad
\tau=x,\quad\quad
u(\tau)=y(x);
\end{equation}
then, $y(x)$ solves the ODE
\begin{equation} \label{eq:ydp3}
y^{\prime \prime}(x)=\frac{(y^{\prime}(x))^{2}}{y(x)}-\frac{y^{\prime}(x)}{x}
-\frac{1}{x}\!\left(8\varepsilon(y(x))^{2}+\frac12\right)-\frac{1}{4y(x)},
\end{equation}
where, now, the prime denotes differentiation with respect to $x$.
\begin{remark} \label{rem:dp3-ydp3}
If $y(x)$ is a solution of equation~\eqref{eq:ydp3},
then function $u(\tau)$ defined as
\begin{equation}\label{eq:changevar}
x=\alpha\,\tau,\quad\quad
u(\tau)=\alpha\,y(x),\quad\quad
\alpha=\sqrt{-2\kappa\,\varepsilon b\,\mi},
\end{equation}
solves equation~\eqref{eq:dp3} for $a=\kappa\,\mi/2$.
\hfill $\blacksquare$\end{remark}
Consider the following formal expansion for the function $y(x)$ as $x\to0$:
\begin{equation} \label{eq:taylor-expansion}
y(x) \! = \! -\frac{x}{2} \! \left(1 \! + \! \sum_{m=1}^{\infty}c_{m}x^m \right),
\end{equation}
where the coefficients $c_m$, $m=\mathbb{N}$, are independent of $x$.
Substituting the expansion~\eqref{eq:taylor-expansion} into equation~\eqref{eq:ydp3},
one arrives at the following recurrence relation for $c_{m}$, $m\geqslant2$:
\begin{equation} \label{eq:recurrence}
(m^{2} \! - \! 1)c_{m} \! = \! \sum_{p=0}^{m-2}(p \! + \! 2)(m \! - \! 2(p \! + \! 1))c_{p+1}
c_{m-p-1} \! + \! 4 \sum_{p=0}^{m-2} \sum_{p_{1}=0}^{p}c_{m-p-2}c_{p_{1}}c_{p-p_{1}},
\end{equation}
where $c_0=1$ and $c_1\in\mathbb{C}$ is a parameter.
The first few coefficients are
\begin{equation} \label{eqs:c1-9}
\begin{gathered}
c_{2} \! = \! \frac{4}{3}, \, \qquad \, c_{3} \! = \! \frac{4}{3}c_{1}, \, \qquad \, c_{4} \! = \!
\frac{4}{9}c_{1}^{2} \! + \! \frac{16}{15}, \, \qquad \, c_{5} \! = \! \frac{206}{135}c_{1}, \,
\qquad \, c_{6} \! = \! \frac{512}{675}c_{1}^{2} \! + \! \frac{256}{315}, \\
c_{7} \! = \! \frac{4}{27}c_{1}^{3} \! + \! \frac{1336}{945}c_{1}, \, \qquad \, c_{8} \! = \!
\frac{10768}{11025}c_{1}^{2} \! + \! \frac{4864}{8505}, \, \qquad \, c_{9} \! = \!
\frac{1936}{6075}c_{1}^{3} \! + \! \frac{253774}{212625}c_{1}.
\end{gathered}
\end{equation}
As can be seen {}from equations~\eqref{eqs:c1-9}, the coefficients $c_{m}=c_{m}(c_{1})$
are polynomials of $c_{1}$ over $\mathbb{Q}$. It is more complicated, though, to prove the
following statements.
\begin{proposition} \label{prop:parity}
The parity of the natural number $m$ coincides with the parity of the polynomial $c_{m}(c_{1})$
considered as a  function of $c_{1}$.
\end{proposition}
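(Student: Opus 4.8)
The plan is to argue by strong induction on $m$, exploiting the fact that the recurrence \eqref{eq:recurrence} is homogeneous with respect to the $\mathbb{Z}/2$-grading on $\mathbb{Q}[c_1]$ in which $c_1$ is assigned parity $1$ (so that a monomial $c_1^k$ has parity $k \bmod 2$). Concretely, I want to show that $c_m(c_1)$ is an even polynomial when $m$ is even and an odd polynomial when $m$ is odd; equivalently, that $c_m(-c_1) = (-1)^m c_m(c_1)$.

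The base of the induction is immediate: $c_0 = 1$ has parity $0$ and $c_1$ has parity $1$. Fix $m \geqslant 2$ and assume that for every $k$ with $0 \leqslant k < m$ the polynomial $c_k$ has parity $k \bmod 2$. The essential observation is purely combinatorial: in the first sum of \eqref{eq:recurrence} each summand carries the factor $c_{p+1}c_{m-p-1}$, whose subscripts add to $m$; in the double sum each summand carries the factor $c_{m-p-2}c_{p_1}c_{p-p_1}$, whose subscripts add to $m-2$. Since parity in $c_1$ is additive under multiplication (a product of polynomials of parities $\alpha$ and $\beta$ has parity $\alpha+\beta \bmod 2$), the inductive hypothesis shows that every summand in the first sum has parity $m \bmod 2$, while every summand in the double sum has parity $(m-2) \equiv m \pmod 2$. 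Hence the entire right-hand side of \eqref{eq:recurrence} is a polynomial of parity $m \bmod 2$. Because $m^2 - 1 \neq 0$ for $m \geqslant 2$, dividing by $(m^2-1)$ leaves the parity unchanged, so $c_m$ has parity $m \bmod 2$, which closes the induction.

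Two points require care but present no real difficulty. First, one must check that all indices appearing on the right-hand side of \eqref{eq:recurrence} lie strictly below $m$, so that the inductive hypothesis genuinely applies: in the first sum the subscripts $p+1$ and $m-p-1$ range over $\{1,\dots,m-1\}$, while in the double sum the subscripts $m-p-2$, $p_1$, $p-p_1$ all lie in $\{0,\dots,m-2\}$. Second, one must confirm the additivity of the $c_1$-parity under products, which is elementary. The only substantive ingredient is thus the bookkeeping that both index-sums, $m$ and $m-2$, are congruent to $m$ modulo $2$; this is exactly what makes the grading preserved. An equivalent and perhaps more transparent formulation avoids explicit parity counting: set $\hat{c}_m(c_1) := (-1)^m c_m(-c_1)$, substitute $c_1 \mapsto -c_1$ into \eqref{eq:recurrence}, and verify that the sequence $\{\hat{c}_m\}$ satisfies the very same recurrence with the same initial data $\hat{c}_0 = 1$ and $\hat{c}_1 = c_1$. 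Uniqueness of the solution of \eqref{eq:recurrence} then forces $\hat{c}_m = c_m$ for all $m$, which is precisely the asserted parity identity.
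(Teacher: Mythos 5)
Your proof is correct and takes essentially the same approach as the paper: induction on $m$ via the recurrence \eqref{eq:recurrence}, with the base read off from the first coefficients and the step resting on the fact that the subscript sums $m$ and $m-2$ in the two sums are congruent modulo $2$. You simply make explicit the grading bookkeeping (and the tidy $c_1\mapsto-c_1$ uniqueness reformulation) that the paper compresses into ``a straightforward consequence of the recurrence relation.''
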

\begin{proof}
The proof is done by mathematical induction. The base of the induction argument follows {}from
equations~\eqref{eqs:c1-9}, and the induction step is a straightforward consequence of
the recurrence relation~\eqref{eq:recurrence}.
\end{proof}
\begin{proposition} \label{prop:degree-c-m}
For $k \! \in \! \mathbb{Z}_{+} \! := \! \lbrace 0 \rbrace \cup \mathbb{N}$,
\begin{equation} \label{eq:degree-c-m}
\deg (c_{m}(c_{1})) \! = \!
\left\{
\begin{aligned}
&k, \quad m \! = \! 3k \! \quad \! \text{and} \! \quad \! m \! = \! 3k+2, \\
&k \! + \! 1, \quad m \! =\! 3k+1.
\end{aligned}
\right.
\end{equation}
\end{proposition}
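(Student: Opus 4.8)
The plan is to split the statement into the upper bound $\deg(c_m(c_1)) \le d_m$ and the matching nonvanishing $\deg(c_m(c_1)) \ge d_m$, where $d_m$ abbreviates the right-hand side of \eqref{eq:degree-c-m}. It is convenient to write $d_m = (m + \rho_m)/3$ with $\rho_m \in \{0,2,-2\}$ according as $m \equiv 0,1,2 \pmod 3$, so that the whole argument becomes bookkeeping of the top $c_1$-degree through the recurrence \eqref{eq:recurrence}. By Proposition~\ref{prop:parity} the exponents of $c_1$ occurring in $c_m$ all have the parity of $m$, so any failure of the upper bound would be by at least $2$; it therefore suffices to control the coefficient of $c_1^{d_m+2}$ on the right-hand side of \eqref{eq:recurrence}.

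For the upper bound I would argue by strong induction on $m$, using \eqref{eq:recurrence} with $m^2-1 \ne 0$. A short rearrangement recasts the quadratic part symmetrically as $-\sum_{1 \le i < m/2}(m-2i)^2 c_i c_{m-i}$, while the cubic part is a sum of products $c_{j_1} c_{j_2} c_{j_3}$ with $j_1+j_2+j_3 = m-2$. Granting the bound for smaller indices, the $c_1$-degree of a generic term is at most $d_i + d_{m-i}$, resp.\ $d_{j_1}+d_{j_2}+d_{j_3}$, and a finite case check on residues modulo $3$ shows this never exceeds $d_m$ when $m \equiv 0,1 \pmod 3$, so these two classes settle the bound unconditionally (and thereby define the leading coefficients $A_i := [c_1^{d_i}]c_i$). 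The sole exception is $m \equiv 2 \pmod 3$: there the quadratic terms with $i \equiv m-i \equiv 1$ and the cubic terms with $j_1 \equiv j_2 \equiv j_3 \equiv 1 \pmod 3$ both reach degree $d_m+2$, so the upper bound reduces to showing that a specific quadratic-plus-cubic expression in the $A_i$ with $i \equiv 1 \pmod 3$ vanishes.

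This cancellation is the main obstacle, and it is exactly what the Super Generating Function of Section~\ref{sec:generating-A} supplies. Setting $z = x c_1^{1/3}$, $\omega = c_1^{1/3}$ and $y = -\tfrac{x}{2}W$ in \eqref{eq:ydp3}, one finds that $W$ organizes into a formal series $W = \sum_{j \ge 0} \omega^{2-2j} g_j(z)$ whose coefficients are the generating functions of the leading $c_1$-parts of $c_m$ in the three residue classes; in particular $g_0 = F_1 = \sum_{k \ge 0} A_{3k+1} z^{3k+1}$, with $A_1 = 1$. Collecting the leading order $\omega^2$ yields a closed second-order ODE for $F_1$, and expanding that ODE in powers of $z$ reproduces, term by term, precisely the vanishing identity demanded in the previous paragraph, which closes the upper bound for $m \equiv 2 \pmod 3$.

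It remains to prove $A_m \ne 0$ for every $m$. Writing $F_1 = z\,\psi(z^3)$ reduces the $F_1$-ODE to $9w\psi'' + 9\psi' - 9w(\psi')^2/\psi = 4\varepsilon\psi^2$ with $\psi(0)=1$, which, after $M = \ln(w\psi)$, $s = \ln w$, becomes the Liouville-type equation $9 M_{ss} = 4\varepsilon e^{M}$; this integrates by quadrature to an elementary closed form for $F_1$ from which one reads off $A_{3k+1} > 0$ for all $k$. The orders $\omega^0$ and $\omega^{-2}$ then furnish linear inhomogeneous ODEs for $F_0 = 1 + \sum_{k \ge 1} A_{3k} z^{3k}$ and $F_2 = \sum_{k \ge 0} A_{3k+2} z^{3k+2}$, driven by the now-known $F_1$; solving these and checking nonvanishing (in fact positivity) of their Taylor coefficients gives $A_{3k} \ne 0$ and $A_{3k+2} \ne 0$, completing the proof. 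The genuinely substantive steps are thus the $m \equiv 2$ cancellation and the explicit integration of the three generating-function equations; the residue arithmetic and the induction itself are routine.
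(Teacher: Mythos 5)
Your two--pillar strategy is, at bottom, the paper's own: the upper bound is proved by strong induction on \eqref{eq:recurrence} (Lemma~\ref{lem:cm-inequality}), and the nonvanishing of the top coefficient comes from explicitly integrated generating functions (Proposition~\ref{prop:senior-coeffs}). Your $F_1,F_0,F_2$ are exactly the paper's $A_0,A_1,A_2$ in the expansion \eqref{eq:w-Ak}; your closed equation for $F_1$ is \eqref{eq:A0}, which the paper integrates as a solvable case of the third Painlev\'e equation, obtaining \eqref{eq:A0-rational} --- your Liouville substitution ($M_{ss}=\tfrac49 e^{M}$ in $s=\ln w$) is an equivalent route to the same $A_0(z)=z/(1-2z^3/9)^2$ and hence to $p_{3k+1,0}=(k+1)(2/9)^k>0$; and your driven linear equations for $F_0,F_2$ are \eqref{eq:A1}--\eqref{eq:A2}, whose rational single-valued solutions give the explicit positive formulae of Proposition~\ref{prop:senior-coeffs}. (Two small points: the recurrence \eqref{eq:recurrence} and equation \eqref{eq:w} are $\varepsilon$-free, so the $\varepsilon$ in your reduced equation $9w\psi''+9\psi'-9w(\psi')^2/\psi=4\varepsilon\psi^2$ should be dropped; and for $F_0,F_2$ one must also justify the selection of the rational, log-free solution, as the paper does by setting $C_4=C_3=0$.) Where you genuinely add something is the case $m\equiv2\pmod 3$ of the upper bound: you are right that term-wise estimation fails there and that a cancellation at degree $d_m+2$ between the quadratic and cubic sums is required. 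This is real: already for $m=5$ the recurrence contains $-9c_1c_4$, contributing $-9\cdot\tfrac49\,c_1^3$, which is cancelled by $+4c_1^3$ from the triple sum; in general the $(1,1)$-pairs enter \eqref{eq:S1-converted} with coefficients $-9(k-2l)^2\neq0$, so $\deg c_{3k+2}\leqslant k$ cannot follow from the induction hypothesis alone (parity, via Proposition~\ref{prop:parity}, disposes only of degree $k+1$). The paper's Lemma~\ref{lem:cm-inequality} verifies only $m=3k$ in detail and asserts the remaining cases are analogous; your analysis shows that $m\equiv2\pmod3$ is \emph{not} analogous, and that the identity needed is precisely the coefficientwise form of \eqref{eq:A0}, i.e.\ of $z\bigl(F_1F_1''-(F_1')^2\bigr)+F_1F_1'=4zF_1^3$. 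On this point your write-up is more complete than the printed proof.

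There is, however, one step you must repair. You obtain the closed $F_1$-ODE by ``collecting the leading order $\omega^2$'' of $W=\sum_j\omega^{2-2j}g_j(z)$, but the assertion that $W$ contains no orders $\omega^4,\omega^6,\dots$ is \emph{equivalent} to the degree bounds you are in the middle of proving; as written, the argument is circular exactly at the $m\equiv2$ step where you need it. Nor can you escape by extracting a closed recursion for the $A_{3k+1}$ directly from \eqref{eq:recurrence}: the top-coefficient recurrences mix the residue classes (for $m=4$ one gets $15p_{4,0}=-4p_{3,0}+12$, which involves a class-$0$ top), so the three leading series cannot be decoupled at the level of the raw recurrence. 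A clean repair is to define $F_1$ inside the induction as the generating function of the coefficients $[c_1^{k+1}]c_{3k+1}$ (well defined whether or not they are ``leading''), prove the explicit formulae of Proposition~\ref{prop:senior-coeffs} for all three classes simultaneously from the already-available top recurrences, and then verify your cancellation identity as the finite convolution identity $\tfrac92\sum_{l=0}^{k}(k-2l)^2a_la_{k-l}=4\sum_{l_1+l_2+l_3=k-1}a_{l_1}a_{l_2}a_{l_3}$ for $a_l=(l+1)(2/9)^l$ --- equivalently, check once that $z/(1-2z^3/9)^2$ satisfies the polynomial form of \eqref{eq:A0}. This is in substance what Section~\ref{sec:generating-A} supplies (with the non-formal grounding of the expansion delegated to the analogous argument in \cite{KitSIGMA2019}); with that grounding in place your proof closes, and it documents a cancellation that the published proof of Lemma~\ref{lem:cm-inequality} leaves implicit.
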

\begin{proof}
This proposition is an immediate corollary of the following lemma and Proposition~\ref{prop:senior-coeffs}.
\end{proof}
\begin{lemma}\label{lem:cm-inequality}
For $k\in\mathbb{Z}_{+}$,
\begin{equation} \label{ineq:degree-c-m}
\deg (c_{m}(c_{1})) \! \leqslant\!
\left\{
\begin{aligned}
&k, \quad m \! = \! 3k \! \quad \! \text{and} \! \quad \! m \! = \! 3k+2, \\
&k \! + \! 1, \quad m \! =\! 3k+1.
\end{aligned}
\right.
\end{equation}
\end{lemma}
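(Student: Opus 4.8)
The plan is to prove~\eqref{ineq:degree-c-m} by strong induction on $m$, simply reading off the degree of the right-hand side of the recurrence~\eqref{eq:recurrence} (recall $m^2-1\neq0$ for $m\geqslant2$, so the degree of the right-hand side equals $\deg c_m$). Denote the claimed bound by $d(m)$, so that $d(3k)=d(3k+2)=k$ and $d(3k+1)=k+1$; the base cases $m=0,1$ are immediate, and~\eqref{eqs:c1-9} provides a check up to $m=9$. Assuming $\deg(c_\ell(c_1))\leqslant d(\ell)$ for all $\ell<m$, the first (quadratic) sum in~\eqref{eq:recurrence} contributes a product $c_ic_j$ with $i=p+1$, $j=m-p-1$, hence $i+j=m$, of degree at most $d(i)+d(j)$; the second (cubic) sum contributes $c_ic_jc_l$ with $i+j+l=m-2$, of degree at most $d(i)+d(j)+d(l)$. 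Thus the whole induction step reduces to the arithmetic of $d$ along partitions of $m$ and of $m-2$.

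First I would establish the relevant subadditivity estimates by writing each index as $3a+r$ with $r\in\{0,1,2\}$ and checking residues. A short case analysis shows that if $i+j=m$ then $d(i)+d(j)\leqslant d(m)$ \emph{unless} $i\equiv j\equiv1\pmod 3$, and if $i+j+l=m-2$ then $d(i)+d(j)+d(l)\leqslant d(m)$ \emph{unless} $i\equiv j\equiv l\equiv1\pmod 3$, equality being attained in several non-exceptional configurations. For $m\equiv0,1\pmod 3$ no exceptional configuration can arise (the residue sums $2$ and $0$ of the all-$1$ patterns are incompatible with $m$ and $m-2$ in those classes), so the induction step closes immediately with the bound $\deg c_m\leqslant d(m)$.

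The only remaining case is $m=3k+2$, where both exceptional patterns occur and overshoot the target: a quadratic term with $i=3a+1$, $j=3(k-a)+1$ reaches degree $d(m)+2=k+2$, and a cubic term with all three indices $\equiv1\pmod3$ likewise reaches $k+2$. Here Proposition~\ref{prop:parity} does part of the work: since $c_m$ has the parity of $m\equiv k\pmod2$ and $k+2\equiv k$ while $k+1\not\equiv k$, the putative degree-$(k+1)$ contribution is automatically absent, and it suffices to show that the coefficient of $c_1^{k+2}$ on the right-hand side of~\eqref{eq:recurrence} vanishes. Collecting the exceptional terms and writing $\sigma_a:=c_{3a+1,\,a+1}$ for the coefficient of $c_1^{a+1}$ in $c_{3a+1}$, this coefficient equals
\begin{equation*}
3\sum_{a=0}^{k}(3a+2)(k-2a)\,\sigma_a\sigma_{k-a}
\;+\;4\sum_{\substack{a+b+c=k-1\\ a,b,c\geqslant0}}\sigma_a\sigma_b\sigma_c ,
\end{equation*}
so the lemma reduces to proving that this expression is zero for every $k\geqslant1$ (the case $k=0$ being trivial, consistent with $c_2=\tfrac43$).

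The main obstacle is precisely this cancellation identity, which is a genuine relation among the senior coefficients and cannot follow from crude degree counting: the verification at $m=5$, where the quadratic and cubic contributions are $-4c_1^3$ and $+4c_1^3$, already shows both that it holds and that the two sums in~\eqref{eq:recurrence} must be handled together. I would prove it by specializing~\eqref{eq:recurrence} to $m=3k+1$ and extracting the coefficient of $c_1^{k+1}$, which yields a recurrence for the $\sigma_k$ themselves, and then showing that this recurrence forces the displayed combination to vanish; equivalently, passing to the generating function $f(t)=\sum_{a\geqslant0}\sigma_at^a$, the leading (edge) part of~\eqref{eq:recurrence} produces a first-order ODE for $f$, of which the required identity is an immediate differential consequence. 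This is the substantive content isolated in Proposition~\ref{prop:senior-coeffs} and made explicit through the generating-function construction of Section~\ref{sec:generating-A}; granting it, the step for $m\equiv2\pmod3$ closes and the bound~\eqref{ineq:degree-c-m} follows.
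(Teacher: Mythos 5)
Your reduction is correct, and it is sharper than the paper's own treatment at precisely the delicate point. The paper proves this lemma by the same strong induction on \eqref{eq:recurrence}, but it carries out the degree count only for $m=3k$ and then asserts that the remaining cases ``do not present any additional difficulties.'' Your residue analysis shows this is accurate for $m\equiv0,1\pmod 3$ but not for $m\equiv2\pmod 3$: there the pairs $c_{3a+1}c_{3(k-a)+1}$ in the first sum and the triples with all indices $\equiv1\pmod 3$ in the double sum genuinely reach degree $k+2$, these contributions do not vanish individually (your check at $m=5$; likewise the terms $\mp\tfrac{16}{3}c_1^4$ at $m=8$), and---since by \eqref{eq:S1-converted} the quadratic contribution carries an overall minus sign while the cubic one enters with a plus---the bound $\deg c_{3k+2}\leqslant k$ holds only because of an exact cancellation between the two sums, invisible to degree bookkeeping. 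You have thus correctly isolated a substantive identity among the senior coefficients $\sigma_a$; the paper acknowledges this sign phenomenon only in Remark~\ref{rem:cm-lemma}, in connection with the equality of Proposition~\ref{prop:degree-c-m}, and its own inductive step for $m\equiv2\pmod 3$ is silent about it.

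The genuine gap in your proposal is the proof of that cancellation identity. Extracting the coefficient of $c_1^{k+1}$ from \eqref{eq:recurrence} at $m=3k+1$ does \emph{not} yield a recurrence for the $\sigma_k$ ``themselves'': at top degree the quadratic sum contributes pairs $c_{3a+1}c_{3b}$ and the cubic sum contributes triples $c_{3a+1}c_{3b+1}c_{3c}$, so the relation couples the $\sigma_a$ to the leading coefficients $p_{3b,0}$ of the $c_{3b}$, and the analogous extraction at $m\equiv0\pmod 3$ drags in $p_{3c+2,0}$ as well; the top-coefficient system closes only across all three residue classes. Consequently the asserted ``first-order ODE for $f$'' is not an immediate consequence of \eqref{eq:recurrence}: the decoupled equation for the generating function of the $\sigma$'s is the paper's \eqref{eq:A0}, and it arises from the logarithmic-derivative form \eqref{eq:w} after the rescaling \eqref{eq:rescaling}---a derivation that must actually be performed, or else the coupled three-family system must be solved. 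Be careful, too, with your fallback: Proposition~\ref{prop:senior-coeffs} is proved in the paper from the structure \eqref{eq:cm-structure}, which is justified by the very lemma you are proving, so citing it outright is circular within the paper's architecture. The repair is to strengthen your strong induction so that it carries the explicit top coefficients of all three residue classes (in particular $\sigma_a=(a+1)(2/9)^a$, well defined at stage $m$ since $3a+1<m$) and to verify the resulting identities by generating functions; with these values your displayed combination does vanish for every $k\geqslant1$, but until that verification is supplied, the step for $m\equiv2\pmod 3$ is reduced to the identity, not closed.
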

\begin{proof}The proof uses mathematical induction.

The base of the induction argument is proven upon inspection of equations~\eqref{eqs:c1-9}:
$\deg c_0=\deg c_2=0$, $\deg c_1=1$, $\deg c_3=\deg c_5=1$, and $\deg c_4=2$,
because $0=3\times0$ and $2=3\times0+2$, $1=3\times0+1$, $3=3\times1$ and $5=3\times1+2$, and
$4=3\times1+1$, respectively. Thus, the base of induction is verified for the equality sign in \eqref{ineq:degree-c-m},
as it should be according to Proposition~\ref{prop:degree-c-m}.

The inductive step is a straightforward, but tedious, exercise, since it requires consideration
of the three cases $m=3k$, $m=3k+1$, and $m=3k+2$.
Let's consider, for example, the case $m=3k$, assuming that the statement of the lemma is valid for
all $m_1<m$. We calculate the degree of the $p$th polynomial in the first sum of \eqref{eq:recurrence}.
We have to consider the following three subcases:

$(1)\; p=3l \Rightarrow p+1=3l+1 \Rightarrow \deg\,c_{p+1}\leqslant l+1 \Rightarrow m-p-1=3(k-l-1)+2
\Rightarrow \deg\,c_{m-p-1}\leqslant k-l-1 \Rightarrow \deg\,c_{p+1}c_{m-p-1}\leqslant k$;

$(2)\; p=3l+1 \Rightarrow p+1=3l+2 \Rightarrow \deg\,c_{p+1}\leqslant l \Rightarrow m-p-1=3(k-l-1)+1
\Rightarrow \deg\,c_{m-p-1}\leqslant k-l \Rightarrow \deg\,c_{p+1}c_{m-p-1}\leqslant k$;

$(3)\; p=3l+2 \Rightarrow p+1=3l+3 \Rightarrow \deg\,c_{p+1}\leqslant l+1 \Rightarrow m-p-1=3(k-l-1)
\Rightarrow \deg\,c_{m-p-1}\leqslant k-l-1 \Rightarrow \deg\,c_{p+1}c_{m-p-1}\leqslant k$.

We now proceed to calculate the degree of the polynomial in the double sum in \eqref{eq:recurrence} labeled by $p$ and $p_1$.
Let $p=3l$; then, irrespective of the value of the summation index $p_1$, $\deg\,c_{p_1}c_{p-p_1}\leqslant l$: the proof
is the same as the one for the first sum considered above, but with changes $m\to p$, $p+1\to p_1$, and $k\to l$. Since,
according to the induction assumption, $\deg\,c_{m-p-2}\leqslant k-l$, we get that
$\deg\,c_{m-p-2}c_{p_1}c_{p-p_1}\leqslant k$. Similarly, one proves that
$\deg\,c_{m-p-2}c_{p_1}c_{p-p_1}\leqslant k$ for $p=3l+1$ and $p=3l+2$.

Thus, it is proven that $c_m$ is the sum of polynomials of the degree $\leqslant k$, which means that $\deg\,c_m\leqslant k$.
The remaining cases in \eqref{ineq:degree-c-m} do not present any additional difficulties, and can be verified analogously.
\end{proof}
\begin{remark}\label{rem:cm-lemma}
There may be a question as to why Proposition~\ref{prop:degree-c-m} can not be verified in the same way as
Lemma~\ref{lem:cm-inequality}.
By inspection, we see that the leading coefficients of the polynomials $c_m(c_1)$ are positive, thus one may suspect that
a slightly extended induction hypothesis, which subsumes this fact, may be proved.
The problem involved, however, is that the first sum, after combining together like terms, has an overall minus sign
(see equation~\eqref{eq:S1-converted} below);
therefore, without \emph{a priori} knowledge of the behaviour of the leading coefficients of the polynomials
$c_m(c_1)$, such an extended induction hypothesis is difficult to formulate.
Proposition~\ref{prop:senior-coeffs} (see Section~\ref{sec:generating-A} below) resolves this problem.
\hfill $\blacksquare$\end{remark}
\begin{proposition} \label{prop:convergence}
The series~\eqref{eq:taylor-expansion} converges for all $c_{1} \! \in \! \mathbb{C}$, that is,
it represents the Taylor-series expansion of a solution of equation~\eqref{eq:ydp3}
at $x=0$.\footnote{Via the substitutions~\eqref{eqs:y-x-b=a}, it also represents the Taylor-series expansion
of a solution of equation~\eqref{eq:dp3} at $\tau=0$.}
\end{proposition}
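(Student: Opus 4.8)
The plan is to prove convergence by the method of majorants. Note first that $x=0$ is a (regular) singular point of equation~\eqref{eq:ydp3}: the indicial/resonance structure is encoded in the factor $m^2-1$ in the recurrence~\eqref{eq:recurrence}, which vanishes exactly at $m=\pm1$, i.e.\ at the resonance where the free parameter $c_1$ enters. Consequently the classical Cauchy existence theorem for analytic ODEs does not apply directly, and a direct bound on the coefficients is the cleanest route. First I would pass to absolute values in \eqref{eq:recurrence} to get, for $m\geqslant2$, an estimate for $|c_m|$ in terms of the $|c_j|$ with $j<m$, and then build an explicit analytic majorant.

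The only delicate point—and the step I expect to be the main obstacle—is the polynomial weight $(p+2)(m-2(p+1))$ in the first sum of \eqref{eq:recurrence}: bounding it crudely by $m^2$ would leave, after division by $m^2-1$, an uncompensated factor growing with $m$ and would destroy any geometric majorant. To avoid this I would symmetrize the first sum. Writing $q=p+1$ and using $c_qc_{m-q}=c_{m-q}c_q$, the first sum becomes $-\tfrac12\sum_{q=1}^{m-1}(m-2q)^2c_qc_{m-q}$ (this is the sign-reversed form alluded to in Remark~\ref{rem:cm-lemma}). Since $|m-2q|\leqslant m-2$ for $1\leqslant q\leqslant m-1$, one has $(m-2q)^2\leqslant(m-2)^2\leqslant m^2-1$ for every $m\geqslant2$, so the dangerous weight is absorbed entirely by the denominator. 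The second (triple) sum carries no polynomial weight at all: up to the factor $4/(m^2-1)\leqslant 4/3$, it is exactly the coefficient $[x^{m-2}]C^3$ of the cube of the full series $C(x)=\sum_{k\geqslant0}c_kx^k$.

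Combining these two observations yields, for $m\geqslant2$, the bound
\[
|c_m|\leqslant \tfrac12\sum_{q=1}^{m-1}|c_q|\,|c_{m-q}|+\tfrac43\,[x^{m-2}]\,\widetilde{C}^{\,3},\qquad \widetilde{C}(x):=\sum_{k\geqslant0}|c_k|\,x^k.
\]
I would then define a majorant sequence $\bar b_m\geqslant0$ by $\bar b_0=1$, $\bar b_1=|c_1|$, and by turning this inequality into an equality for $m\geqslant2$; since every coefficient on the right is nonnegative and involves only indices $<m$, an immediate induction gives $|c_m|\leqslant\bar b_m$. Summing the defining relation shows that $\bar B(x):=\sum_{m\geqslant0}\bar b_mx^m$ satisfies the algebraic equation
\[
\bar B=1+|c_1|\,x+\tfrac12(\bar B-1)^2+\tfrac43\,x^2\bar B^{3}.
\]
With $F(B,x):=B-1-|c_1|x-\tfrac12(B-1)^2-\tfrac43x^2B^3$ one checks $F(1,0)=0$ and $\partial_BF(1,0)=1\neq0$ (the quadratic term has vanishing $B$-derivative at $B=1$, and the cubic term is killed by the factor $x^2$), so the analytic implicit function theorem supplies a unique $\bar B$ analytic near $x=0$ with $\bar B(0)=1$. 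Hence $\bar B$ has a positive radius of convergence, and therefore so does $\sum_m c_mx^m$, for every fixed $c_1\in\mathbb{C}$. Finally, since a convergent power series may be differentiated termwise and \eqref{eq:recurrence} was obtained precisely by equating coefficients after substituting \eqref{eq:taylor-expansion} into \eqref{eq:ydp3}, the sum is a genuine holomorphic solution of \eqref{eq:ydp3} on a neighbourhood of the origin, which is the assertion. The crux is the symmetrization of the first sum: it is the one manipulation converting the apparently $m^2$-sized weight into a quantity bounded by the denominator $m^2-1$; everything else is routine majorant bookkeeping.
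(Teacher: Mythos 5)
Your proof is correct, but it takes a genuinely different route from the paper's. Both arguments pivot on the same symmetrization of the first sum---your $-\tfrac12\sum_{q=1}^{m-1}(m-2q)^2c_qc_{m-q}$ is exactly the paper's identity~\eqref{eq:S1-converted} before folding---but from there the paths diverge. The paper runs an explicit weighted induction on the estimate $\lvert c_m\rvert<\alpha C^m/(m+1)^2$, bounds the resulting harmonic sums by constants involving $\pi^2/6-1$, and closes the induction by verifying numerically (via \textsc{Maple}) that the quantity $R_m$ in~\eqref{eq:Rm} is $\leqslant1$ for $m\geqslant8$ with $\alpha=1.04$, $C^2=12$; this buys quantitative control, namely an explicit lower bound on the radius of convergence (which the paper then optimizes toward $1/(\pi^2/6-1)$), at the cost of finitely many machine-checked cases. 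Your majorant argument replaces all of that bookkeeping with the implicit function theorem applied to $F(B,x)=B-1-\lvert c_1\rvert x-\tfrac12(B-1)^2-\tfrac43x^2B^3$, with $\partial_BF(1,0)=1\neq0$; this is softer, fully numerics-free, and arguably cleaner, but yields only the positivity of the radius of convergence rather than an estimate (one could extract a bound from the singularity of the algebraic function $\bar B$, but you do not). One inaccuracy is worth flagging, though it does not affect your proof's validity: your claim that the symmetrization is ``the crux'' is overstated for your own method. Since $(p+2)\lvert m-2(p+1)\rvert\leqslant m(m-2)<m^2-1$ for all $0\leqslant p\leqslant m-2$, even the crude, unsymmetrized bound absorbs the weight into the denominator $m^2-1$ and merely changes the coefficient $\tfrac12$ of $(B-1)^2$ to $1$ in the majorant equation---a change invisible to the implicit function theorem, since the $B$-derivative of the quadratic term vanishes at $(1,0)$ regardless of its coefficient. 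The symmetrization is genuinely essential only in the paper's scheme, where the induction closes precisely because the relevant constant $\pi^2/6-1$ is beaten below~$1$.
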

\begin{proof}
In contrast to the more precise evaluation for the degrees of the polynomials $c_m$ as in Lemma~\ref{lem:cm-inequality},
it is easy to prove by induction that $\deg (c_{m}(c_{1}))<m$; therefore, it is evident that one can choose real
numbers $\alpha\geqslant1$ and $C \! > \! 1$ such that the estimate
\begin{equation} \label{ineq:c-m estimate}
\lvert c_{m} \rvert \! < \! \frac{\alpha C^{m}}{(m \! + \! 1)^2}
\end{equation}
holds for all $m \! = \! 0,1,\ldots,M$ for any \emph{a priori} fixed $M\in\mathbb{Z}_{+}$.
It will be shown that there exists a large enough $M$ such that if the
inequality~\eqref{ineq:c-m estimate} is true for all $m<M$, then it is valid for all $m\geqslant M$.

The first sum on the right-hand side of equation~\eqref{eq:recurrence} can be rewritten as
\begin{equation}\label{eq:S1-converted}
S_{1}:=\sum_{p=0}^{m-2}(p+2)(m-2(p+1))c_{p+1}c_{m-p-1}=-\sum_{p=0}^{\lfloor \frac{m-2}2\rfloor}(m-2p-2)^2c_{p+1}c_{m-p-1},
\end{equation}
where $\lfloor*\rfloor$ is the floor of the rational number $*$. (The upper limit in the last sum can also be written as
$\lfloor \frac{m-3}2\rfloor$.) Using the induction conjecture, we evaluate this sum as follows:
\begin{equation}\label{ineq:S1-estimate}
\begin{aligned}
|S_1|&\leqslant\sum_{p=0}^{\lfloor \frac{m-2}2\rfloor}\frac{(m-2p-2)^2}{(p+2)^2(m-p)^2}\alpha^2C^m
=\left(\sum_{p=0}^{m-2}\frac1{(p+2)^2}-\frac{2}{m+2}\sum_{p=0}^{m-2}\frac1{(p+2)}\right)\alpha^2C^m\\
&<\left(\frac{\pi^2}{6}-1-\frac{2(\ln m+\gamma-1)}{m+2}\right)\alpha^2C^m,
\end{aligned}
\end{equation}
where  $\gamma=0.57721\ldots$ is the Euler constant.

We now evaluate the double sum in equation~\eqref{eq:recurrence}:
\begin{equation*}
S_{2}:=4\sum_{p=0}^{m-2} \sum_{p_{1}=0}^{p}c_{m-p-2}c_{p_{1}}c_{p-p_{1}}.
\end{equation*}
It, too, can be bounded in an analogous manner, namely, the induction conjecture implies
\begin{equation} \label{ineq:S2-induction}
\lvert S_{2} \rvert \! < \! \left(\sum_{p=0}^{m-2} \frac{4}{(m \! - \! p \! - \! 1)^2} \,
\sum_{p_{1}=0}^{p} \frac{1}{(p_{1} \! + \! 1)^2(p \! - \! p_{1} \! + \! 1)^2} \right) \!
\alpha^{3}C^{m-2}.
\end{equation}
For the inner sum in \eqref{ineq:S2-induction}, one writes
\begin{equation}\label{ineq:S2-inner}
\!\sum_{p_{1}=0}^{p} \frac{1}{(p_{1} \! + \! 1)^2(p \! - \! p_{1} \! + \! 1)^2}
=\frac{2}{(p+2)^2}\sum_{p_{1}=0}^{p}\!\left(\frac{1}{(p_1+1)^2}+\frac{2}{(p+2)(p_1+1)}\right)\!
<\frac{\frac{\pi^2}{3}+2}{(p+2)^2}.
\end{equation}
Substituting the estimate~\eqref{ineq:S2-inner} into the inequality~\eqref{ineq:S2-induction}, we find that
\begin{equation}\label{ineq:S2-estimate}
|S_2|<\frac{4(\frac{\pi^2}{3}+2)\alpha^3C^{m-2}}{(m+1)^2}\sum_{p=0}^{m-2}\left(\frac{1}{p+2}+\frac{1}{m-p-1}\right)^2
<\frac{4(\frac{\pi^2}{3}+2)(\frac{\pi^2}{3}+1)\alpha^3C^{m-2}}{(m+1)^2}.
\end{equation}

Combining the estimate~\eqref{ineq:S2-estimate} with a simplified version (without the $\mathcal{O}\big(\ln\,m/m\big)$
correction term) of \eqref{ineq:S1-estimate}, we get the following estimate for $c_m$:
\begin{equation}\label{ineq:c-m}
|c_m|<\frac{m+1}{m-1}\alpha\left(\frac{\pi^2}{6}-1
+\frac{4(\frac{\pi^2}{3}+2)(\frac{\pi^2}{3}+1)\alpha}{C^2(m+1)^2}\right)\frac{\alpha C^m}{(m+1)^2}.
\end{equation}
Therefore, in order to complete the inductive step, we have to show that
\begin{equation}\label{eq:Rm}
R_m:=\frac{m+1}{m-1}\alpha\left(\frac{\pi^2}{6}-1+\frac{4(\frac{\pi^2}{3}+2)(\frac{\pi^2}{3}+1)\alpha}{C^2(m+1)^2}\right)
\leqslant1.
\end{equation}
The function $R_m$ is a monotonically increasing function of $\alpha\geqslant1$ and a monotonically decreasing function of $m$
and $C$. If one assumes that $\alpha\in[1,1.04]$ and $C^2\geqslant12$, then it is enough to verify that $R_m\leqslant1$ for
the boundary values $\alpha=1.04$ and $C^2=12$. Actually, with the help of \textsc{Maple}, one finds that
$R_8=0.9922344425\ldots$, while $R_7=1.064756992\ldots$; thus, $R_m<1$ for $m>7$, and $R_m>1$ for $m=2,\ldots,7$. We see that
the induction procedure can be initiated from $M=7$. Since $\alpha$ is bounded, it is then obvious that, after---perhaps---an
increase of $C$, one can reach validity for the base of induction (cf. \eqref{ineq:c-m estimate}) for any \emph{a priori}
given $M$.
\end{proof}
\begin{remark}\label{rem:minC^2increase}
Below, we find a minimum
increase of $C$, if any, such that the condition~\eqref{ineq:c-m estimate} is valid for the first 8 members of the sequence
$c_m$, $m=0,\ldots,7$:
\begin{equation}\label{ineq:c0-c2}
c_0=1\leqslant\alpha,\quad
|c_1|\leqslant\frac{\alpha C}{4},\quad
c_2=\frac43=\frac{12}{9}\leqslant\frac{\alpha C^2}{9}.
\end{equation}
The second and third inequalities in \eqref{ineq:c0-c2} suggest the condition $C\geqslant\max\{\sqrt{12/\alpha},4|c_1|/\alpha\}$.
This slight improvement of the minimal value for $C$ (cf. the proof of Proposition~\ref{prop:convergence}) does not change
the values of $m$ for which we have to verify the base of the induction procedure because $R_8=0.9974290608\ldots$ and
$R_7=1.071574929\ldots$.
This is the final condition on the parameter $C$ which follows from our proof. The estimates for the remaining coefficients
$c_3,\ldots, c_7$ can be obtained by following the scheme demonstrated below for estimating the coefficients
$c_3$ and $c_4$. In these estimates, we rely upon the inequalities \eqref{ineq:c0-c2} and assume that
$\alpha\leqslant 7/5=1.4$:
\begin{gather}\label{ineq:c3c4}
c_3=\frac43c_1\leqslant\frac{\alpha C^2}{9}\cdot\frac{\alpha C}{4}=\frac{4\alpha}{9}\cdot\frac{\alpha C^3}{4^2}
\leqslant\frac{28}{45}\cdot\frac{\alpha C^3}{4^2}<\frac{\alpha C^3}{4^2},\;
c_4=\frac13\cdot c_1\cdot\frac43c_1+\frac95\left(\frac43\right)^2\nonumber\\
\leqslant\frac13\cdot\frac{\alpha C}{4}\cdot\frac{\alpha C^3}{4^2}+\frac95\left(\frac{\alpha C^2}{9}\right)^2
=5\alpha\left(\frac19+\frac5{3\cdot4^3}\right)\frac{\alpha C^4}{5^2}\leqslant\frac{7\cdot79}{576}\frac{\alpha C^4}{5^2}
<\frac{\alpha C^4}{5^2}.\nonumber
\end{gather}
The reader who has verified the induction hypothesis for the remaining coefficients $c_5$, $c_6$, and $c_7$ for $\alpha=1.4$
may be interested as to whether or not a further increase of $\alpha$ is possible? (Note that $\alpha=1.4$ represents the
increase in the value of $\alpha=1.04$ given in the proof of Proposition~\ref{prop:convergence}.)
This increase constitutes a better estimate for the
radius of convergence of the series~\eqref{eq:taylor-expansion}; for example, to complete the above proof for $\alpha=1.4$,
one has to verify the validity of the induction hypothesis for the coefficients $c_m$, $m\leqslant27$. In fact,
the logarithmic correction (cf. \eqref{ineq:S1-estimate}), which we omitted in order to simplify the proof, helps considerably
in validating the induction hypothesis; for $\alpha=1.4$, say, by keeping this correction in $R_m$, it is sufficient to check
the induction hypothesis only for $c_0,\ldots, c_7$, as in the case for $\alpha=1.04$. The inclusion of the logarithmic
correction in $R_m$, however, requires
a more careful study of the dependence of $R_m$ on $m$; in particular, one has to verify that, initially, it is decreasing
to some minimal value, and then monotonically increasing up to $\alpha\big(\pi^2/6-1\big)$. Clearly, the theoretical limit for
the value of $\alpha$ in the proof of Proposition~\ref{prop:convergence} is $1/\big(\pi^2/6-1\big)=1.550546094\ldots$.
\hfill$\blacksquare$\end{remark}
%%%%%%%%%%%%%%%%%%%%%%%%%%%%%%%%%%%%%%%%%%%%%%%%%%%%%%%%%%%%%%%%%%%%%%%%%%%%%%%%%%%%%%%%%%%
%%%%%%%%%%%%%%%%%%%%%%%%%%%%%%%%%%%%%%%%%%%%%%%%%%%%%%%%%%%%%%%%%%%%%%%%%%%%%%%%%%%%%%%%%%%
\section{Generating Functions} \label{sec:generating-A}
The coefficients of the polynomials $c_{m}(c_{1})$ can be calculated with the help of
so-called Super Generating Functions (SGFs) \cite{KitSIGMA2019}. Since the polynomials $c_m(c_1)$, $m\geqslant3$, possess only
one singular point at $c_1=\infty$, we can define only one SGF. The idea behind this definition is to visualize the parameter
$c_1$, which is hidden as a parameter in the Taylor-series expansion characterizing the solution introduced in
Section~\ref{sec:Taylor}, as a parameter in the coefficients of an equation related to \eqref{eq:ydp3}. We achieve this
visualization by using a scaling transformation for equation~\eqref{eq:ydp3}. Bearing in mind Lemma~\ref{lem:cm-inequality},
this transformation can be guessed; in this work, the SGF, $w(z,c_{1})$, is defined as follows:
\begin{equation}\label{eq:rescaling}
y(x)=-\frac{c_{1}^{1/3}}{2}zw(z,c_1),\qquad
x=\frac{z}{c_{1}^{1/3}},
\end{equation}
where $y(x)$ is the solution studied in Section~\ref{sec:Taylor}, and
\begin{equation}\label{eq:w}
\left(z \frac{w^{\prime}}{w} \right)^{\prime} \! = \! 4zw \! + \! \frac{1}{c_{1}^{2/3}zw} \! - \!
\frac{1}{c_{1}^{4/3}zw^{2}},
\end{equation}
where $w=w(z,c_1)$, and the prime denotes differentiation with respect to $z$. As soon as equation~\eqref{eq:w} is written,
we find ourselves in a situation similar to that studied in the work~\cite{KitSIGMA2019}.
In order to fulfill the definition of the SGF, it is necessary to specify its asymptotic expansion as $c_{1} \! \to \! \infty$:
\begin{equation} \label{eq:w-Ak}
w \! = \! \sum_{k=0}^\infty \frac{A_{k}(z)}{c_{1}^{2k/3}},
\end{equation}
where the coefficients $A_k(z)$  are assumed to be independent of $c_{1}$. As will be seen below, the coefficients
$A_{k}(z)$ serve as the generating functions for the coefficients of the polynomials $c_{m}(c_{1})$.

For brevity, we often denote $A_k(z)=A_k$. Substituting the expansion~\eqref{eq:w-Ak} into equation~\eqref{eq:w},
one shows that
\begin{align}
\left(z \frac{A_{0}^{\prime}}{A_{0}} \right)^{\prime} &= 4zA_{0}, \label{eq:A0} \\
\left(z \! \left(\frac{A_{1}}{A_{0}} \right)^{\prime} \right)^{\prime} \! - \! 4zA_{1} &=
\frac{1}{zA_{0}}, \label{eq:A1} \\
\left(z \! \left(\frac{A_{2}}{A_{0}} \right)^{\prime} \right)^{\prime} \! - \! 4zA_{2} &=
\left(\frac{z}{2} \! \left(\frac{A_{1}^{2}}{A_{0}^{2}} \right)^{\prime} \right)^{\prime} \!
- \! \frac{(A_{1} \! + \! 1)}{zA_{0}^{2}}, \label{eq:A2} \\
\left(z \! \left( \frac{A_{n}}{A_{0}} \right)^{\prime} \right)^{\prime} \! - \! 4zA_{n} &=
f_{n}(A_{1},\ldots,A_{n-1}), \quad n \! \geqslant \! 3, \label{eq:An}
\end{align}
where
\begin{align}
f_{n}(A_{1},\ldots,A_{n-1}) &:= \sum_{\substack{i_{1}+\cdots+i_{k}+j=n \\ 1 \leqslant j
\leqslant n-1 \\ i_{k} \geqslant 1, \; \; k \geqslant 1}}(-1)^{k+1} \! \left(z \alpha_{i_{1}}
\cdots \alpha_{i_{k}} \alpha_{j}^{\prime} \right)^{\prime} \nonumber \\
+ \, \frac{1}{zA_{0}} \sum_{\substack{i_{1}+\cdots+i_{m}=n-1 \\ i_{k} \geqslant 1}}(-1)^{m}
\alpha_{i_{1}} \cdots \alpha_{i_{m}} &- \frac{1}{zA_{0}^{2}} \sum_{\substack{i_{1}+ \cdots
+ i_{m}=n-2 \\ i_{k} \geqslant 1}}(-1)^{m}(m \! + \! 1) \alpha_{i_{1}} \cdots \alpha_{i_{m}},
\nonumber
\end{align}
with $\alpha_{k} \! = \! A_{k}/A_{0}$, $k \! \in \! \mathbb{N}$; for example, the first two
functions $f_{n} \! := \! f_{n}(A_{1},\ldots,A_{n-1})$, $n \! = \! 3,4$, are given by
\begin{align*}
f_{3} &= \left(z \! \left(\frac{A_{1}A_{2}}{A_{0}^{2}} \right)^{\prime} \right)^{\prime} \! - \!
\left(\frac{z}{3} \! \left(\frac{A_{1}^{3}}{A_{0}^{3}} \right)^{\prime} \right)^{\prime} \! + \!
\frac{1}{zA_{0}^{2}} \! \left(-A_{2} \! + \! \frac{A_{1}^{2} \! + \! 2A_{1}}{A_{0}} \right),\\
f_{4} &= \left(z \! \left(\frac{A_{1}A_{3}}{A_{0}^{2}} \right)^{\prime} \right)^{\prime} \! + \!
\left(\frac{z}{2} \! \left(\frac{A_{2}^{2}}{A_{0}^{2}} \right)^{\prime} \right)^{\prime} \! + \!
\left(\frac{z}{4} \! \left(\frac{A_{1}^{4}}{A_{0}^{4}} \right)^{\prime} \right)^{\prime} \! - \!
\left(z \! \left(\frac{A_{1}^{2}A_{2}}{A_{0}^{3}} \right)^{\prime} \right)^{\prime} \\
&+ \frac{1}{zA_{0}^{2}} \! \left(-A_{3} \! + \! \frac{2A_{2}(A_{1} \! + \! 1)}{A_{0}} \! - \!
\frac{(A_{1}^{3} \! + \! 3A_{1}^{2})}{A_{0}^{2}} \right).
\end{align*}

The immediate goal is to apply the construction of the SGF to the
solution~\eqref{eq:taylor-expansion}. It is straightforward to verify that, after the
rescaling~\eqref{eq:rescaling} of the solution~\eqref{eq:taylor-expansion}, the corresponding
function $w(z)$ has the expansion~\eqref{eq:w-Ak} as $c_{1} \! \to \! \infty$, where the
functions $A_{k}(z)$ are single-valued; in fact, the expansion~\eqref{eq:w-Ak} can formally be
obtained as the re-expansion of the Taylor series~\eqref{eq:taylor-expansion} as $c_{1}\to\infty$.
This construction can be put on non-formal footing as done in the analogous situation in \cite{KitSIGMA2019};
we do not repeat the corresponding arguments here.

One begins with the function $A_{0}(z)$: it can be defined as the single-valued solution of
equation~\eqref{eq:A0} with asymptotics
\begin{equation} \label{eq:A0asympt0}
A_{0}(z) \underset{z \to 0} = z+\mathcal{O}\big(z^{2}\big).
\end{equation}
Equation~\eqref{eq:A0} can be viewed as a solvable case of the third Painlev\'{e} equation;
its general solution reads:\footnote{\label{foot:solspecial} There is also a special solution $A_0(z)=z^{-2}(C_0-\ln\,z)^{-2}/2$
which is not single-valued at $z=0$.}
\begin{equation*}
A_{0}(z) \! = \! -\frac{C_{2}C_{1}^{2}z^{C_{1}}}{2z^{2} \! \left(1 \! + \! C_{2}z^{C_{1}}
\right)^{2}},
\end{equation*}
where $C_{1}$ and $C_{2}$ are arbitrary constants of integration. Applying the
expansion~\eqref{eq:A0asympt0}, one deduces that $C_{1} \! = \! 3$ and $C_{2} \! = \! -2/9$, whence
\begin{equation} \label{eq:A0-rational}
A_{0}(z) \! = \! \frac{z}{\left(1 \! - \! 2z^{3}/9 \right)^{2}}.
\end{equation}
In view of equation~\eqref{eq:A0-rational}, one notes that equations~\eqref{eq:A1}--\eqref{eq:An}
are special inhomogeneous hypergeometric equations; their homogeneous part has the solution
\begin{equation*}
C_{3} \frac{z(2z^{3} \! + \! 9)}{(2z^{3} \! - \! 9)^{3}} \! + \! C_{4} \frac{z(2z^{3} \ln{z} \! + \! 9
\ln{z} \! + \! 12)}{(2z^{3} \! - \! 9)^{3}},
\end{equation*}
where $C_{3}$ and $C_{4}$ are arbitrary constants of integration. Since our objective is to look
for single-valued solutions, one must set $C_{4} \! = \! 0$. The parameter $C_{3}$ should be
chosen in accordance with the expansion of $A_{n}(z)$ as $z \! \to \! 0$; in fact, all of the functions
$A_{k}(z)$ are rational functions of $z$. The functions $A_{1}(z)$ and $A_{2}(z)$ correspond to the
parameter $C_{3} \! = \! 0$:
\begin{equation} \label{eq:A1-A2-rational}
\begin{gathered}
A_{1}(z) \! = \! \frac{(2z^{3} \! + \! 45)(4z^{6} \! - \! 252z^{3} \! - \! 405)}{25(2z^{3} \! - \!
9)^{3}}, \\
A_{2}(z) \! = \! -\frac{8}{16875}z^{5} \! + \! \frac{1108}{91875}z^{2} \! - \! \frac{162z^{2}
(340z^{9} \! - \! 14112z^{6} \! - \! 436509z^{3} \! - \! 1638792)}{30625(2z^{3} \! - \! 9)^{4}}.
\end{gathered}
\end{equation}
The functions $A_{0}(z)$, $A_{1}(z)$, and $A_{2}(z)$ are generating functions for the leading
coefficients of the polynomials $c_{3k+1}$, $c_{3k}$, and $c_{3k+2}$, respectively. Expanding
$A_{0}(z)$, $A_{1}(z)$, and $A_{2}(z)$, respectively, as $z \! \to \! 0$, one shows that
\begin{gather*}
A_{0}(z) \! = \! z \! + \! \frac{4}{9}z^{4} \! + \! \frac{4}{27}z^{7} \! + \! \frac{32}{729}z^{10}
\! + \! \frac{80}{6561}z^{13} \! + \! \frac{64}{19683}z^{16} \! + \! \frac{448}{531441}z^{19}
\! + \! \mathcal{O} \! \left(z^{22} \right), \\
A_{1}(z) \! = \! 1 \! + \! \frac{4}{3}z^{3} \! + \! \frac{512}{675}z^{6} \! + \! \frac{1936}{6075}
z^{9} \! + \! \frac{6272}{54675}z^{12} \! + \! \frac{18496}{492075}z^{15} \! + \!
\frac{2048}{177147}z^{18} \!+ \! \mathcal{O} \! \left(z^{21} \right), \\
A_{2}(z) \! = \! \frac{4}{3}z^{2} \! + \! \frac{206}{135}z^{5} \! + \! \frac{10768}{11025}z^{8}
\! + \! \frac{1174888}{2480625}z^{11} \! + \! \frac{290816}{1488375}z^{14} \! + \!
\frac{14567072}{200930625}z^{17} \!+ \! \mathcal{O} \! \left(z^{20} \right).
\end{gather*}
Proposition~\ref{prop:degree-c-m} allows one to introduce the following notation for the
coefficients $p_{m,n}$ of the polynomials $c_{m}(c_{1})$:
\begin{equation} \label{eq:cm-structure}
c_{m}(c_{1}) \! = \sum_{n=0}^{r_{m}}p_{m,n}c_{1}^{[m/3]+\delta-2n}, \quad r_{m} \! := \!
[([m/3]+\delta)/2],
\end{equation}
where $[\ast]$ denotes the entire part of $\ast$, and
\begin{equation*}
\delta \! = \!
\begin{cases}
0, &\text{$m \! = \! 3k$ \, \text{and} \, $m \! = \! 3k \! + \! 2$,} \\
1, &\text{$m \! = \! 3k \! + \! 1$.}
\end{cases}
\end{equation*}
Using the explicit formulae for the generating functions $A_{0}(z)$, $A_{1}(z)$, and $A_{2}(z)$, one
derives expressions for the leading coefficients $p_{m,0}$.
\begin{proposition} \label{prop:senior-coeffs}
\begin{gather*}
p_{3k+1,0} \! = \! (k \! + \! 1) \! \left(\frac{2}{9} \right)^{k}, \quad k \! \in \! \mathbb{Z}_{+}, \\
p_{0,0} \! = \! 1, \quad p_{3k,0} \! = \! \frac6{25}(3k \! + \! 2)^{2} \! \left(\frac{2}{9} \right)^{k},
\quad k \! \in \! \mathbb{N}, \\
p_{2,0} \! = \! \frac{4}{3}, \quad p_{5,0} \! = \! \frac{206}{135}, \quad p_{3k+2,0} \! = \!
\frac{9}{30625}(196k \! + \! 281)(3k \! + \! 4)^{2} \! \left(\frac{2}{9} \right)^{k}, \; \; k \!
= \! 2,3,\ldots.
\end{gather*}
\end{proposition}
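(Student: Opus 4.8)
The plan is to read the three leading-coefficient families directly off the explicit rational generating functions $A_0(z)$, $A_1(z)$, $A_2(z)$, exploiting the fact---already recorded above---that these are the generating functions for the leading coefficients of $c_{3k+1}$, $c_{3k}$, and $c_{3k+2}$, respectively. Made precise, this means $A_0(z)=\sum_{k\geqslant0}p_{3k+1,0}z^{3k+1}$, $A_1(z)=\sum_{k\geqslant0}p_{3k,0}z^{3k}$, and $A_2(z)=\sum_{k\geqslant0}p_{3k+2,0}z^{3k+2}$. This identification follows by substituting the rescaling~\eqref{eq:rescaling} together with the structural form~\eqref{eq:cm-structure} into the expansion~\eqref{eq:w-Ak} and matching the powers $c_1^{-2j/3}$: among all terms of \eqref{eq:cm-structure}, only the senior one ($n=0$) lands in the three lowest coefficients $A_0$, $A_1$, $A_2$, the choice among them being dictated by the residue of $m$ modulo $3$. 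The proposition is thereby reduced to extracting Taylor coefficients from three explicit rational functions, and throughout I would put $u:=z^3$, so that each $A_j$ becomes, up to the monomial factor $z^{j}$, a rational function of the single variable $u$.

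For $A_0(z)=z(1-2z^3/9)^{-2}$ the extraction is immediate from $(1-t)^{-2}=\sum_{k\geqslant0}(k+1)t^k$ with $t=2u/9$, giving $p_{3k+1,0}=(k+1)(2/9)^k$ directly. For $A_1$ and $A_2$ I would decompose by partial fractions in the variable $v:=2u-9$, writing the proper part as a combination of $v^{-1},v^{-2},\dots$ and then expanding $v^{-j}=(-9)^{-j}\sum_{k\geqslant0}\binom{k+j-1}{j-1}(2/9)^k u^k$. The coefficient of $u^k$ in $A_1$ then appears as $(2/9)^k$ times a quadratic in $k$, and the coefficient of $u^k$ in $A_2$ as $(2/9)^k$ times a cubic in $k$. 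The remaining purely algebraic task is to verify that these collapse to the advertised closed forms $\tfrac{6}{25}(3k+2)^2$ for $A_1$ and $\tfrac{9}{30625}(196k+281)(3k+4)^2$ for $A_2$, a finite check comparing the coefficients of $k^3,k^2,k,1$.

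Two bookkeeping points require separate treatment. First, the polynomial (non-fractional) part of each decomposition contributes only at $u^0$: for $A_1$ this corrects the value at $k=0$ from $\tfrac{6}{25}(3k+2)^2\big|_{k=0}=\tfrac{24}{25}$ to the true $p_{0,0}=1$, which is precisely why the general formula is stated only for $k\in\mathbb{N}$. Second, in $A_2$ the explicit additive terms $-\tfrac{8}{16875}z^5+\tfrac{1108}{91875}z^2$ are supported on $u^1$ and $u^0$ alone; adding them to the values of the general cubic formula at $k=1$ and $k=0$ reproduces exactly $p_{5,0}=\tfrac{206}{135}$ and $p_{2,0}=\tfrac43$, whereas for $k\geqslant2$ the formula is unperturbed. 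I would confirm these two exceptional values by directly combining the fractions.

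The main obstacle is the bulk and delicacy of the rational-function algebra for $A_2$: clearing the quartic denominator $(2z^3-9)^4$, pinning down the four partial-fraction numerators, and confirming that the resulting cubic in $k$ factors as $(196k+281)(3k+4)^2$. This is conceptually routine but the step most vulnerable to arithmetic slips, so I would organize it by first passing to the $u$-variable, then tabulating the contributions of $v^{-1},\dots,v^{-4}$ to the coefficient of $u^k$, and finally matching the four coefficients of the cubic against the expansion of $(196k+281)(3k+4)^2$; the agreement at $k=0,1$ after the correction terms are restored furnishes an independent numerical check.
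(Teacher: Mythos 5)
Your proposal is correct and takes essentially the same route as the paper: the paper's proof likewise substitutes the structural form~\eqref{eq:cm-structure} into the Taylor series~\eqref{eq:taylor-expansion}, rearranges the summation to obtain~\eqref{eq:w-Ak} with $A_{0}(z)=\sum_{k}p_{3k+1,0}z^{3k+1}$, $A_{1}(z)=\sum_{k}p_{3k,0}z^{3k}$, $A_{2}(z)=\sum_{k}p_{3k+2,0}z^{3k+2}$, and then concludes by expanding the explicit rational functions~\eqref{eq:A0-rational} and~\eqref{eq:A1-A2-rational} in Taylor series. Your partial-fraction organization in $v=2z^{3}-9$ and the bookkeeping for the exceptional values $p_{0,0}$, $p_{2,0}$, $p_{5,0}$ (which checks out: e.g., the polynomial part $1/25$ of $A_{1}$ corrects $24/25$ to $1$ at $k=0$) merely make explicit the expansion step the paper leaves to the reader.
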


\begin{proof} Substituting equation~\eqref{eq:cm-structure} into the Taylor
series~\eqref{eq:taylor-expansion} and rearranging the summation, one arrives at the formal
expansion~\eqref{eq:w-Ak}, where
\begin{gather*}
A_{0}(z) \! = \! \sum_{k=0}^{\infty}p_{3k+1,0}z^{3k+1}, \qquad A_{1}(z) \! = \!
\sum_{k=0}^{\infty}p_{3k,0}z^{3k}, \qquad A_{2}(z) \! = \! \sum_{k=0}^{\infty}
p_{3k+2,0}z^{3k+2}.
\end{gather*}
Expanding the rational functions~\eqref{eq:A0-rational} and~\eqref{eq:A1-A2-rational} into
Taylor series completes the proof.
\end{proof}
The subsequent triple of functions $A_{3}(z)$, $A_{4}(z)$, and $A_{5}(z)$ are generating functions for
the coefficients $p_{m,1}$.
\begin{proposition} \label{prop:p-m1-coeffs}
\begin{gather*}
p_{4,1} \! = \! \frac{16}{15}, \qquad p_{7,1} \! = \! \frac{1336}{945}, \\
p_{3k+1,1} \! = \! \frac{2}{5^{6}} \! \left(2916k^{2} \! + \! \frac{328779}{49}k \! - \!
\frac{34129}{147} \right) \! (k+1)^{2} \! \left(\frac{2}{9} \right)^{k}, \; \; k \! = \! 3,4,\ldots,
\end{gather*}
\begin{gather*}
p_{6,1} \! = \! \frac{256}{315}, \qquad p_{9,1} \! = \! \frac{253774}{212625}, \qquad
p_{12,1} \! = \! \frac{4788251008}{4862521125}, \\
p_{3k,1} \! = \! \frac{1}{5^{7}} \! \left(\frac{8748}{5}k^{3} \! + \! \frac{223074}{49}k^{2} \!
- \! \frac{281982223}{48020}k \! - \! \frac{15481989}{41503} \right) \! (3k \! + \! 2)^{2} \!
\left(\frac{2}{9} \right)^{k}, \; \; k \! = \! 5,6,\ldots,
\end{gather*}
\begin{align*}
&\qquad p_{8,1} \! = \! \frac{4864}{8505}, \qquad p_{11,1} \! = \! \frac{3958936}{4209975},
\qquad p_{14,1} \! = \! \frac{44744664088576}{51771262417875}, \\
p_{3k+2,1} \! =& \, \frac{3}{5^{10}} \! \left(\frac{34992}{5}k^{4} \!+ \! \frac{10865016}{245}
k^{3} \! + \! \frac{86107493}{12005}k^{2} \! - \! \frac{86860273454}{1452605}k \! + \!
\frac{8029312488}{7014007} \right) \\
\times& \, (3k \! + \! 4)^{2} \! \left(\frac{2}{9} \right)^{k}, \quad k \! = \! 5,6,\ldots.
\end{align*}
\end{proposition}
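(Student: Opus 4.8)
The plan is to follow exactly the generating-function strategy already used to prove Proposition~\ref{prop:senior-coeffs}, but now at the next order in $c_1^{-2/3}$. First I would record the correspondence between the coefficients $p_{m,1}$ and the functions $A_3,A_4,A_5$. Substituting the structural form~\eqref{eq:cm-structure} into the Taylor series~\eqref{eq:taylor-expansion}, rescaling by~\eqref{eq:rescaling}, and collecting the terms proportional to $c_1^{-2j/3}$, one checks that the power $c_1^{[m/3]+\delta-2}$ (the $n=1$ coefficient) contributes to $A_3$, $A_4$, $A_5$ precisely when $m\equiv1,0,2\pmod 3$, respectively. Concretely,
\begin{gather*}
A_{3}(z)=\sum_{k\geqslant1}p_{3k+1,1}z^{3k+1},\qquad
A_{4}(z)=\sum_{k\geqslant2}p_{3k,1}z^{3k},\qquad
A_{5}(z)=\sum_{k\geqslant2}p_{3k+2,1}z^{3k+2},
\end{gather*}
so that it suffices to compute $A_3$, $A_4$, $A_5$ in closed form and read off their Taylor coefficients.

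Next I would solve equation~\eqref{eq:An} for $n=3,4,5$. Since $A_0,A_1,A_2$ are the explicit rational functions~\eqref{eq:A0-rational}--\eqref{eq:A1-A2-rational}, the right-hand sides $f_3$, $f_4$ (already displayed) and $f_5$ are explicit rational functions of $z$. Each equation shares the same linear inhomogeneous operator, whose single-valued homogeneous solution is proportional to $z(2z^3+9)/(2z^3-9)^3$ (the logarithmic second solution must be discarded, i.e. $C_4=0$). I would construct a single-valued particular solution by the standard variation-of-parameters procedure using the two homogeneous solutions, then fix the free constant $C_3$ so that $A_n(z)$ exhibits the behaviour as $z\to0$ dictated by the correspondence above. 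As in the case of $A_2$, the outcome is a rational function with denominator $(2z^3-9)^{n+2}$ together with a low-degree polynomial addend; here $A_3$, $A_4$, $A_5$ carry poles of order $5$, $6$, $7$, respectively.

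With the closed forms in hand, the final step is Taylor expansion. Writing each $A_n$ over $(1-2z^3/9)^{n+2}$ and using the generalized binomial series $(1-2z^3/9)^{-p}=\sum_{k\geqslant0}\binom{k+p-1}{p-1}(2/9)^kz^{3k}$, the coefficient of $z^{3k+r}$ is a polynomial in $k$ of degree $p-1$ times $(2/9)^k$. This explains the shape of every formula in the statement: the pole orders $5$, $6$, $7$ produce polynomials in $k$ of total degrees $4$, $5$, $6$, matching the factorizations (quadratic)$\times(k+1)^2$, (cubic)$\times(3k+2)^2$, and (quartic)$\times(3k+4)^2$, respectively. The additive polynomial part of each $A_n$ perturbs only finitely many of the lowest Taylor coefficients, which is precisely why the small-index values $p_{4,1},p_{7,1}$ (and $p_{6,1},p_{9,1},p_{12,1}$; $p_{8,1},p_{11,1},p_{14,1}$) are listed separately and the generic formulae hold only from $k=3$ (resp. $k=5$).

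The main obstacle will be the explicit, exact computation of $A_3$, $A_4$, $A_5$: assembling $f_4$ and $f_5$ from the already unwieldy $A_1,A_2$ and then integrating to obtain \emph{single-valued} rational solutions requires verifying that all logarithmic contributions arising in the variation-of-parameters integrals cancel---equivalently, that the relevant residues vanish---so that a rational $A_n$ exists, with $C_3$ then determined by the leading $z\to0$ coefficient. Once single-valuedness is secured, matching the partial-fraction expansion against the stated polynomials in $k$ is mechanical (and most reliably done with computer algebra), but it is exactly the step at which the precise rational constants such as $328779/49$ and $86860273454/1452605$ are generated and must be confirmed.
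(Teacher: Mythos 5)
Your proposal is correct and takes essentially the same route as the paper: the paper likewise solves equation~\eqref{eq:An} successively for $n=3,4,5$, characterizes $A_{3},A_{4},A_{5}$ as the unique rational solutions vanishing faster than $z^{2}$ as $z\to0$, and then reads off the $p_{m,1}$ by matching Taylor expansions against $A_{3}(z)=\sum_{k\geqslant1}p_{3k+1,1}z^{3k+1}$, $A_{4}(z)=\sum_{k\geqslant2}p_{3k,1}z^{3k}$, $A_{5}(z)=\sum_{k\geqslant2}p_{3k+2,1}z^{3k+2}$. Your additional detail---variation of parameters with $C_{4}=0$, cancellation of logarithmic contributions, the pole orders $5,6,7$ explaining the degrees of the polynomial factors in $k$, and the low-order polynomial addends explaining the separately listed initial values---merely makes explicit the computational steps the paper delegates to computer algebra.
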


\begin{proof} Solving Equation~\eqref{eq:An} successively for $n \! = \! 3$, $4$, and $5$, one
derives rational solutions of these equations that vanish as $z \! \to \! 0$ faster than $z^{2}$.
This condition uniquely defines the functions $A_{3}(z)$, $A_{4}(z)$, and $A_{5}(z)$:
\begin{align*}
A_{3}(z) &= -\frac{32}{2953125}z^{7} \! - \! \frac{8752}{4134375}z^{4} \! + \!
\frac{68258}{2296875}z \! + \! \frac{116878}{459375} \, \frac{z}{(1 \! - \! 2z^{3}/9)^{2}} \\
&+ \frac{8086604}{2296875} \, \frac{z}{(1 \! - \! 2z^{3}/9)^{3}} \! - \! \frac{9771516}{765625}
\, \frac{z}{(1 \! - \! 2z^{3}/9)^{4}} \! + \! \frac{139968}{15625} \, \frac{z}{(1 \! - \! 2z^{3}/9)^{5}},
\end{align*}
\begin{align*}
A_{4}(z) &= \frac{256}{13953515625}z^{12} \! - \! \frac{78928}{45581484375}z^{9} \! - \!
\frac{17694848}{1838453203125}z^{6} \! - \! \frac{330698309}{81709031250}z^{3} \\
&+ \frac{61927956}{3242421875} \! + \! \frac{48238574611}{453939062500} \,
\frac{1}{(1 \! - \! 2z^{3}/9)} \! - \! \frac{1400615705869}{453939062500} \,
\frac{1}{(1 \! - \! 2z^{3}/9)^{2}} \\
&+ \frac{1407265401}{2316015625} \, \frac{1}{(1 \! - \! 2z^{3}/9)^{3}} \! + \!
\frac{59441369643}{1875781250} \, \frac{1}{(1 \! - \! 2z^{3}/9)^{4}} \\
&- \frac{1024460784}{19140625} \, \frac{1}{(1 \! - \! 2z^{3}/9)^{5}} \! + \!
\frac{1889568}{78125} \, \frac{1}{(1 \! - \! 2z^{3}/9)^{6}},
\end{align*}
\begin{align*}
A_{5}(z) &= \frac{131072}{197791083984375}z^{14} \! + \! \frac{90116032}{564070869140625}
z^{11} \! - \! \frac{324630499328}{23302394349609375}z^{8} \\
&+ \frac{4366976622}{9785166015625}z^{5} \! - \! \frac{385406999424}{68496162109375}
z^{2} \! + \! \frac{4531503785253}{479473134765625} \, \frac{z^{2}}{(1 \! - \! 2z^{3}/9)} \\
&+ \frac{59545228803909}{479473134765625} \, \frac{z^{2}}{(1 \! - \! 2z^{3}/9)^{2}} \! - \!
\frac{37276082380518}{13699232421875} \, \frac{z^{2}}{(1 \! - \! 2z^{3}/9)^{3}} \\
&+ \frac{14383449268992}{2837119140625} \, \frac{z^{2}}{(1 \! - \! 2z^{3}/9)^{4}} \! + \!
\frac{268395996744}{23447265625} \, \frac{z^{2}}{(1 \! - \! 2z^{3}/9)^{5}} \\
&- \frac{13329012672}{478515625} \, \frac{z^{2}}{(1 \! - \! 2z^{3}/9)^{6}} \! + \!
\frac{136048896}{9765625} \, \frac{z^{2}}{(1 \! - \! 2z^{3}/9)^{7}}.
\end{align*}
Expanding these functions at $z \! = \! 0$ into Taylor series and comparing the respective
expansions with the corresponding series in terms of the coefficients $p_{m,1}$, that is,
\begin{gather*}
A_{3}(z) \! = \! \sum_{k=1}^{\infty}p_{3k+1,1}z^{3k+1}, \qquad A_{4}(z) \! = \!
\sum_{k=2}^{\infty}p_{3k,1}z^{3k}, \qquad A_{5}(z) \! = \! \sum_{k=2}^{\infty}p_{3k+2,1}
z^{3k+2},
\end{gather*}
one arrives at the explicit expressions for the coefficients $p_{m,1}$ stated in the proposition.
\end{proof}
\begin{remark}
Although the functions $A_{n}(z)$, $n \! = \! 2,3,4,5$, derived above look rather cumbersome,
one can easily continue to calculate the functions $A_{n}(z)$ for values of $n \! > \! 5$ with the help of
{\sc Maple/Mathematica}. These functions are uniquely defined as the rational solutions of
equation~\eqref{eq:An} with $A_{n}(z) \! \to \! z^{k}$ as $z \! \to \! 0$ for $k \! > \! 2$. The explicit
value of $k$ for each $n$ is not important for the unique determination of $A_{n}(z)$$;$ rather, it is
sufficient to require that $k \! > \! 2$. The calculation, via {\sc Maple}, of the functions $A_{n}(z)$
for the values of $n$ presented above takes $3$ to $4$ seconds, so that, as a practical matter, this
procedure can be continued for much larger values of $k$.
\hfill $\blacksquare$\end{remark}
%%%%%%%%%%%%%%%%%%%%%%%%%%%%%%%%%%%%%%%%%%%%%%%%%%%%%%%%%%%%%%%%%%%%%%%%%%%%%%%%%%%%%%%%%%%
%%%%%%%%%%%%%%%%%%%%%%%%%%%%%%%%%%%%%%%%%%%%%%%%%%%%%%%%%%%%%%%%%%%%%%%%%%%%%%%%%%%%%%%%%%%
\section{The Content of the Polynomials $c_m(c_1)$} \label{sec:fence}
Propositions~\ref{prop:senior-coeffs} and \ref{prop:p-m1-coeffs}, together with the calculations of
the polynomials $c_{m}$ for the first several hundred values of $m$, imply the following conjecture.
\begin{conjecture}
The coefficients of the polynomials $c_{m}(c_{1})$ are positive rational numbers.
\end{conjecture}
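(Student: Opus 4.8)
The plan is to reduce the coefficientwise positivity of the polynomials $c_m(c_1)$ to the positivity of the Taylor coefficients of the generating functions $A_k(z)$ of Section~\ref{sec:generating-A}. Rationality of the $p_{m,n}$ is immediate from the recurrence \eqref{eq:recurrence} (sums and products of rationals, divided by $m^2-1$), so the real content is positivity. Recall from the proof of Proposition~\ref{prop:senior-coeffs} the identities $A_0(z)=\sum_k p_{3k+1,0}z^{3k+1}$, $A_1(z)=\sum_k p_{3k,0}z^{3k}$, $A_2(z)=\sum_k p_{3k+2,0}z^{3k+2}$, and their analogues for $A_3,A_4,A_5$ in Proposition~\ref{prop:p-m1-coeffs}; more generally the triple $A_{3n},A_{3n+1},A_{3n+2}$ generates the coefficients $p_{m,n}$ of \eqref{eq:cm-structure} as $m$ runs over the three residue classes modulo $3$. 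Every $p_{m,n}$ therefore occurs as a Taylor coefficient of exactly one $A_k$, so the conjecture is \emph{equivalent} to: every $A_k(z)$ has a Taylor expansion at $z=0$ with positive coefficients. The base case is clear, since by \eqref{eq:A0-rational} $A_0(z)=z(1-2z^3/9)^{-2}=\sum_{\ell\ge0}(\ell+1)(2/9)^\ell z^{3\ell+1}$.

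First I would run an induction on $k$, using that each $A_k$ is the unique single-valued solution of \eqref{eq:An}, $\bigl(z(A_k/A_0)'\bigr)'-4zA_k=f_k$, that vanishes at $z=0$ faster than $z^2$. Two ingredients must be controlled. (i) Granting that $A_0,\dots,A_{k-1}$ have positive Taylor coefficients, one must show the inhomogeneity $f_k$ does too. (ii) One must show that the linear solution operator $\mathcal L\colon f_k\mapsto A_k$ fixed by \eqref{eq:An} together with single-valuedness and the vanishing condition is positivity-preserving on power series: solving \eqref{eq:An} order by order in $z$ produces a linear recurrence for the Taylor coefficients of $A_k$ (the factor $A_0=z(1-2z^3/9)^{-2}$ coupling neighbouring orders), and one must verify that this recurrence sends positive data to positive data.

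The hard part will be ingredient (i), which is precisely the cancellation already flagged in Remark~\ref{rem:cm-lemma}. Written out after the definition of $f_n$, the inhomogeneity carries the alternating signs $(-1)^{k+1}$ together with the genuinely negative pieces $-\tfrac{1}{zA_0^2}\sum(\cdots)$; equivalently, in \eqref{eq:recurrence} the first sum contributes with an overall minus sign after collecting like terms, $S_1=-\sum_p(m-2p-2)^2c_{p+1}c_{m-p-1}$ as in \eqref{eq:S1-converted}, whereas the cubic sum $S_2$ contributes positively. The whole problem is thus to prove that, \emph{coefficientwise in} $c_1$, the positive cubic part dominates the subtracted quadratic part, so that $(m^2-1)c_m=S_1+S_2$ emerges with positive coefficients after dividing by $m^2-1>0$. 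A positivity-only induction cannot close exactly because of this subtraction.

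To overcome the obstacle I would strengthen the induction hypothesis with sharp \emph{a priori} two-sided bounds on the $p_{m,n}$, in the spirit of the exact closed forms for $p_{m,0}$ and $p_{m,1}$ in Propositions~\ref{prop:senior-coeffs} and \ref{prop:p-m1-coeffs}. The natural engine for such bounds is the rational structure of the $A_k$: each is a finite combination of the blocks $z^{3\ell+r}(1-2z^3/9)^{-j}$, whose Taylor coefficients are explicit and positive, so the task reduces to showing that the negative contributions in the numerators are uniformly dominated. I expect the technical crux to be making this domination \emph{uniform in both $m$ and $n$}: the exact formulas control only finitely many coefficients per residue class for small $n$, and what is needed is an estimate, valid for all $k$, comparing the negative terms produced by $f_k$ (respectively by $S_1$) against the positive terms supplied by the blocks $(1-2z^3/9)^{-j}$ (respectively by $S_2$). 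Establishing this quantitative domination—perhaps by propagating a suitably weighted norm of $A_k$ through the recurrence of ingredient (ii), or by exhibiting a positive combinatorial model for a sign-corrected reformulation of \eqref{eq:recurrence}—is where the real work, and the main risk, lies.
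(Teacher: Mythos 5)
You should first be aware that the statement you were asked to prove is, in the paper, a \emph{conjecture}: the authors give no proof at all. Their evidence consists of the closed-form expressions for the two top layers of coefficients, $p_{m,0}$ and $p_{m,1}$ (Propositions~\ref{prop:senior-coeffs} and~\ref{prop:p-m1-coeffs}), which are visibly positive, together with machine computation of the first $800$ polynomials $c_m(c_1)$. Your proposal, by your own admission, is likewise not a proof but a programme. Its sound part is the reduction: the identification of the $p_{m,n}$ with the Taylor coefficients of the functions $A_k(z)$, with the triple $A_{3n},A_{3n+1},A_{3n+2}$ generating the layer $p_{m,n}$ over the three residue classes of $m$ modulo $3$, is exactly the rearrangement used in the proofs of Propositions~\ref{prop:senior-coeffs} and~\ref{prop:p-m1-coeffs}, so the conjecture is indeed equivalent to coefficientwise positivity of every $A_k(z)$ at $z=0$. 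But this merely relocates the problem, and neither of your two ingredients is established.

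Concretely: ingredient (i) as you state it ("show the inhomogeneity $f_k$ has positive coefficients") is almost certainly false, not merely hard. The function $f_n$ contains the manifestly negative block $-\frac{1}{zA_0^{2}}\sum(\cdots)$, whose leading singular part only cancels against the derivative terms; and the explicit formula for $A_3(z)$ in the proof of Proposition~\ref{prop:p-m1-coeffs} shows that even the \emph{output} decomposes into the blocks $z^{3\ell+r}(1-2z^3/9)^{-j}$ with coefficients of both signs (e.g.\ the term $-\frac{9771516}{765625}\,z\,(1-2z^3/9)^{-4}$), so positivity can only emerge from cancellations inside the solution operator, which destroys your clean split into "positive $f_k$" plus "positivity-preserving $\mathcal{L}$"; ingredient (ii) is asserted without any argument and would in any case have to handle non-positive data. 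What remains is your proposed "quantitative domination" of $S_1$ by $S_2$, uniformly in $m$ and $n$ --- but that is precisely the obstruction the paper itself flags in Remark~\ref{rem:cm-lemma}: because of the overall minus sign in \eqref{eq:S1-converted}, even the weaker degree statement could not be closed by a direct induction and required the exact leading coefficients supplied by Proposition~\ref{prop:senior-coeffs}. Since you explicitly leave this step open ("the main risk"), your proposal establishes nothing beyond what the paper already records, and the positivity statement remains, both in the paper and in your sketch, an open conjecture rather than a theorem.
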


One says that a rational number is divisible by an integer $p$ if the numerator in the representation
of the rational number as an irreducible fraction of two integers is divisible by $p$. A polynomial
with rational coefficients is said to be divisible by an integer $p$ if all its coefficients are divisible
by $p$.

According to these definitions, the polynomials $c_{m}(c_{1})$ for $m \! = \! 2,\ldots,9$ are divisible by $2$
(cf. equations~\eqref{eqs:c1-9}). Using the recurrence relation~\eqref{eq:recurrence}, one proves
that the $c_{m}(c_{1})$'s are divisible by $2$ for all $m\geqslant2$.  With the help of \textsc{Maple}, one can
formulate the following
\begin{conjecture}\label{con:2-adic-valuation}
For $m \! \geqslant \! 2$,
\begin{equation}
\mathrm{g.c.d.} \; c_{m}(c_{1}) \! := \! \mathrm{g.c.d.} \lbrace p_{m,0},p_{m,1},\ldots,p_{m,r_m} \rbrace
\! = \! 2^{z_{m}},
\end{equation}
where $r_{m}$ is defined in equation~\eqref{eq:cm-structure}, and $z_{m} \! \geqslant \! 1$ is an
integer sequence.
\end{conjecture}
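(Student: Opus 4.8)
The plan is to read the assertion ``g.c.d.\ $c_m(c_1)=2^{z_m}$'' $2$-adically: since every coefficient $p_{m,n}$ is observed to have an \emph{odd} denominator, the statement is equivalent to the conjunction of two facts, namely (I) that $c_m(c_1)\in\mathbb{Z}_{(2)}[c_1]$ for all $m$ (no coefficient acquires a factor of $2$ in its denominator), so that the content is automatically a power of $2$, and (II) that $z_m:=\min_{0\le n\le r_m}v_2(p_{m,n})\ge1$ for $m\ge2$, where $v_2$ denotes the $2$-adic valuation. Part (II) is exactly the divisibility-by-$2$ already announced above; both parts will be attacked simultaneously by strong induction on $m$ using the recurrence~\eqref{eq:recurrence}.

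The inductive mechanism is governed by the left-hand factor $m^2-1=(m-1)(m+1)$. For \emph{even} $m$ this factor is odd, so $v_2(m^2-1)=0$ and division by it neither spoils $2$-integrality nor consumes any factor of $2$; thus for even $m$ it suffices to show that the right-hand side $S_1+S_2$ is itself $2$-integral and even, which follows routinely from the induction hypothesis and the parity bookkeeping of Proposition~\ref{prop:parity}. For \emph{odd} $m$ the factor $m^2-1$ is a product of two consecutive even integers and hence $v_2(m^2-1)\ge3$, so to preserve (I) and gain (II) one must prove the single $2$-adic estimate
\begin{equation*}
v_2\bigl(S_1+S_2\bigr)\ \ge\ v_2(m^2-1)+1 .
\end{equation*}
Here I would use the collapsed form~\eqref{eq:S1-converted} of $S_1$, in which the summands pair into perfect squares $(m-2p-2)^2$ carrying a single overall sign, together with the triple convolution $S_2$; the parity statement of Proposition~\ref{prop:parity} lets one sort the monomials in $c_1$ by parity and extract, term by term, a guaranteed number of factors of $2$ from the induction hypothesis. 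The goal of this step is to show that these guaranteed factors always outnumber $v_2(m^2-1)$.

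I expect this last estimate to be the crux, and the reason the statement is only a conjecture. The difficulty is uniformity in $m$: when $m$ is odd and close to a high power of $2$ the quantity $v_2(m^2-1)$ is large (for instance $v_2(m^2-1)=3,3,4,4,\ldots$ for $m=3,5,7,9,\ldots$, and it grows without bound along $m\equiv\pm1\pmod{2^{k}}$), so the numerator $S_1+S_2$ must carry a correspondingly large and \emph{a priori} unexplained surplus of factors of $2$. A naive term-by-term count through the signed sum $S_1$ and the parity-mixing double sum $S_2$ does not obviously deliver this surplus, and the cancellations responsible for it appear only after the sums are assembled. A complementary route is to exploit the explicit generating functions: each diagonal $n=\mathrm{const}$ of the coefficient table $\{p_{m,n}\}$ is the Taylor series of a rational function whose denominator is a power of $2z^3-9$ and whose dyadic factor $2/9$ controls the valuations (as in $p_{3k+1,0}=(k+1)(2/9)^k$, whence $v_2(p_{3k+1,0})=k+v_2(k+1)$, and the analogous closed forms of Propositions~\ref{prop:senior-coeffs} and~\ref{prop:p-m1-coeffs}); from these one reads $v_2(p_{m,n})$ on any fixed diagonal. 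The remaining obstacle is to control \emph{all} diagonals $0\le n\le r_m$ at once, since their number grows with $m$, so as to bound the minimum $z_m$ from below by $1$ and, ultimately, to justify the explicit fence formulae.
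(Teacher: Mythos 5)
The first thing to say is that the paper contains no proof of this statement to compare you against: Conjecture~\ref{con:2-adic-valuation} is explicitly experimental, formulated on the basis of \textsc{Maple} computation of the first $800$ polynomials $c_m(c_1)$, and the only adjacent fact the paper asserts as provable is the weakest fragment, namely that $2$ divides $c_m(c_1)$ for all $m\geqslant2$, via the recurrence~\eqref{eq:recurrence}; even the explicit fence formulae such as \eqref{eq:z-n-odd} are obtained conjecturally by matching data against OEIS sequences. Measured against that status, your proposal is honest and largely sound in its diagnosis: your even-$m$ step ($m^2-1$ odd, the prefactor $4$ on $S_2$, no dangerous pure-$c_1$ terms in $S_1$, parity bookkeeping via Proposition~\ref{prop:parity}) is the same elementary induction that underlies the paper's one-line divisibility claim, and your identification of the crux --- for odd $m$ one needs $v_2(S_1+S_2)\geqslant v_2(m^2-1)+1$, with $v_2(m^2-1)$ unbounded along $m\equiv\pm1\pmod{2^k}$ and the required surplus of $2$'s visible only after cancellation in the signed sum \eqref{eq:S1-converted} --- is precisely the kind of obstruction that explains why the authors left the statement as a conjecture. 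You do not close this gap, and you say so; as a proof the proposal is therefore incomplete, but it is incomplete at exactly the point where no proof is known.

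There is, however, a genuine flaw in your opening reduction. The conjunction of (I) $c_m(c_1)\in\mathbb{Z}_{(2)}[c_1]$ and (II) $z_m:=\min_n v_2(p_{m,n})\geqslant1$ is \emph{not} equivalent to $\mathrm{g.c.d.}\,c_m(c_1)=2^{z_m}$: it controls only the $2$-part of the content. The conjecture additionally asserts that the g.c.d.\ is a \emph{pure} power of $2$, i.e., that no odd prime divides all the numerators $p_{m,0},\ldots,p_{m,r_m}$ simultaneously (in the paper's reformulation, all odd-prime valuations of the content are non-positive). Nothing in your inductive scheme addresses this: even if your $2$-adic estimate were established, you would still need, for every odd prime $p$, a mod-$p$ nonvanishing argument producing at least one coefficient of $c_m$ with numerator prime to $p$ --- and the recurrence~\eqref{eq:recurrence}, with its division by $m^2-1$, injects odd denominators in a way that makes this nontrivial (the observed odd denominators $3,9,15,135,\ldots$ are an empirical input here, not a consequence of your induction). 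Your alternative generating-function route via Propositions~\ref{prop:senior-coeffs} and~\ref{prop:p-m1-coeffs} correctly reads off $v_2(p_{m,n})$ on fixed diagonals, but, as you note yourself, the number of diagonals grows with $m$, so it bounds neither $z_m$ nor the odd-prime content uniformly; it reproduces the data, which is what the paper itself does, but does not promote the conjecture to a theorem.
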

\begin{remark}
Conjecture~\ref{con:2-adic-valuation} can be reformulated in terms of the \emph{content} of a polynomial:
the presentation of the content of the polynomial $c_m$, $m\geqslant2$, as a factorization in primes has a positive
2-adic valuation and the other valuations are non-positive.
\hfill $\blacksquare$\end{remark}
\begin{remark}\label{rem:c-m-time}
The calculation of the polynomials $c_{m}(c_{1})$ via {\sc Maple} on a notebook computer
with 16Gb RAM and processor Intel Core {\rm i7-7700HQ} takes a considerable amount of time; for example, for $m=100$,
the calculation takes less than $1$ minute, for $m=200$, it takes approximately $10$ minutes, for $m=700$, it takes about
$56$ hours, and, for $m=800$, it takes about $113$ hours.\footnote{\label{foot:c-m-time} This calculation
was done in 2020: newer laptops with 12th/13th generation processors will, certainly, expedite the calculations.}
\hfill $\blacksquare$\end{remark}

The remainder of this section is dedicated to the description of the integer sequence $z_{m}$. This
description is based on numerical calculations employing \textsc{Maple}, and, therefore, will be of a conjectural
nature.

The behaviour of the sequence $z_{m}$ depends crucially on the parity of $m$:
\begin{align}
z_{793}&=4,  &z_{795}& =6,  &z_{797}&=5,  &z_{799}&=6,\label{eqs:large-values-oddfence} \\
z_{794}&=268, &z_{796}&=266, &z_{798}&=275, &z_{800}&=268.\label{eqs:large-values-evenfence}
\end{align}
These values for the sequence $z_m$ are obtained as a result of the calculation discussed in Remark~\ref{rem:c-m-time}.
%%%%%%%%%%%%%%%%%%%%%%%%%%%%%%%%%%%%%%%%%%%%%%%%%%%%%%%%%%%%%%%%%%%%%%%%%%%%%%%%%%%%%%%%%%%%%%%%%%%%%%%%%%%%%%%%%%%%%%%%%
\subsection{The Odd Fence}
One begins with the description of the subsequence of $z_{m}$ for odd $m$.
Set $z_{1}=0$; since the polynomial $c_{1}(c_{1})=c_{1}$, it is not divisible by $2$. Following \cite{KitSIGMA2019},
for the description of the
subsequence $z_{2k-1}$, $k\in\mathbb{N}$, it is convenient to present it in a geometric form
by describing the plot of the function $z_{2k-1}$. If one connects the neighbouring points of the
plot, that is, the points with co-ordinates $(2k-1,z_{2k-1})$ and $(2k+1,z_{2k+1})$, $k\in\mathbb{N}$, by straight-line
segments, then one gets a graph which is called the upper edge of a fence. The fence is defined as the domain of the plane
bounded by the upper edge and the $x$-axis. The fence (its upper edge) is easy to construct with the aid of the following shape:
%\begin{figure}
{\center
\begin{picture}(420,80)
\put(10,40){\circle*{2}}
\put(10,40){\line(1,1){20}}
\put(30,60){\circle*{2}}
\put(30,60){\line(2,-1){20}}
\put(50,50){\circle*{2}}
\put(50,50){\line(2,1){20}}
\put(70,60){\circle*{2}}
\put(70,60){\line(2,-1){20}}
\put(90,50){\circle*{2}}
\put(90,50){\line(1,1){20}}
\put(110,70){\circle*{2}}
\put(110,70){\line(2,-1){20}}
\put(130,60){\circle*{2}}
\put(130,60){\line(2,1){20}}
\put(150,70){\circle*{2}}
\put(170,60){Shape $\mathcal{A}$ is the $8$-tuple of points connected by}
\put(170,40){straight-line segments. The respective heights}
\put(10,20){of the points are: $z,z \! + \! 2,z \! + \! 1,z \! +\! 2,z \! + \! 1,
z \! + \! 3,z \! + \! 2,z \! + \! 3$. The $x$-coordinates of the}
\put(10,0){successive (nearest neighbour) points differ by $2$.}
\end{picture}}
%\caption{bbb}\label{fig:1}
%\end{figure}

With the help of shape $\mathcal{A}$, the fence can be constructed as follows. Put the left end of
$\mathcal{A}$ at the point $(1,z_{1}) \! = \! (1,0)$: the right end of $\mathcal{A}$ has co-ordinates
$(15,z_{15}) \! = \! (15,3)$. The next point on the upper edge the fence has co-ordinates
$(17,z_{15}-a_{1})=(17,1)$. Then, one takes the second
sample of $\mathcal{A}$ and places its left end at the last point $(17,1)$. The co-ordinates of the
right end are $(31,4)$. The subsequent point on the upper edge of the fence has co-ordinates
$(33,z_{31}-a_{2})=(33,1)$. Then, the procedure is continued analogously. The key point
of this procedure is the definition
\begin{equation}\label{eq:drop-down-a-n}
z_{16n+1} \! = \! z_{16n-1} \! - \! a_{n}
\end{equation}
at the $n$th iteration. Symbolically, the fence can be presented as follows:
\begin{equation}\label{eq:A-symbolic}
\mathcal{A} \, a_{1} \mathcal{A} \, a_{2} \mathcal{A} \, a_{3} \mathcal{A} \, a_{4} \ldots,
\end{equation}
where $a_n$ is a symbolic presentation of equation~\eqref{eq:drop-down-a-n}, namely, the height of the fence at the point
with the $x$-coordinate $16n+1$ is lower than its height at the previous point, $16n-1$, by the amount $a_n$.
Clearly, to completely define the fence, one requires knowledge of the numbers $a_{n}$ for all $n \! \in
\! \mathbb{N}$. The sequence $a_{n}$ can be constructed inductively in the following manner. Let $a_{1}
\! = \! 2$, and assume that $a_{1},\ldots,a_{k}$, $k \! \geqslant \! 1$, are constructed; then, the next $k$
terms are $a_{1},\ldots,a_{k-1}$,\footnote{For $k \! = \! 1$, this set is empty.} and $a_{k} \! + \! 1$. The
first few members of this sequence are
\begin{equation} \label{eq:ak-def}
a_{1} \! = \! 2, \; a_{2} \! = \! 3, \; a_{3} \! = \! 2, \; a_{4} \! = \! 4, \; a_{5} \! = \! 2, \;
a_{6} \! = \! 3, \; a_{7} \! = \! 2, \; a_{8} \! = \! 5, \; \ldots.
\end{equation}
The sequence $a_{n}$ is the sequence A085058 in OEIS \cite{OEIS}.
In \cite{OEIS}, one can find other definitions for this sequence; for example, $a_{n}$ is the
number of divisors of $2n$ of the form $2^{k}$, including the trivial divisors, that is, $1=2^{0}$ and $2n$ in case it is
a power of 2. This definition can be reformulated thus: $2^{a_{n}}$ is the highest divisor of the form $2^{k}$ of the number
$4n$, i.e., the $2$-adic valuation of $4n$.

One observes that the shape $\mathcal{A}$ can be presented in the form $\mathcal{B} 1 \mathcal{B}$,
where $\mathcal{B}$ is the following $4$-tuple shape:
%\begin{figure}
{\center
\begin{picture}(420,20)
\put(10,0){The shape $\mathcal{B}$ is the first half of the shape $\mathcal{A}$,}
\put(340,0){\circle*{2}}
\put(340,0){\line(1,1){20}}
\put(360,20){\circle*{2}}
\put(360,20){\line(2,-1){20}}
\put(380,10){\circle*{2}}
\put(380,10){\line(2,1){20}}
\put(400,20){\circle*{2}}
\end{picture}}
%\caption{bbb}\label{fig:1}
%\end{figure}
\\
and $1$ has the same meaning as any of the numbers $a_n$
in the symbolic sequence~\eqref{eq:A-symbolic}, i.e., it denotes the point at the upper edge of the fence whose height
is smaller by $1$ compared to the height of the end-point of the first shape $\mathcal{B}$.

Via the shape $\mathcal{B}$, the fence can be presented symbolically as
\begin{equation}\label{eq:B-symbolic}
\mathcal{B} \, \tilde{a}_{1} \mathcal{B} \, \tilde{a}_{2} \mathcal{B} \, \tilde{a}_{3} \mathcal{B} \,
\tilde{a}_{4} \ldots,
\end{equation}
where the sequence $\tilde{a}_{n}$ reads:
\begin{equation} \label{eq:tilde-an-def}
1,a_{1},1,a_{2},1,a_{3},1,a_{4},\ldots = \! 1,2,1,3,1,2,1,4 \ldots.
\end{equation}
This is the sequence A001511 in \cite{OEIS}. An alternative definition for the sequence $\tilde{a}_{n}$ reads:
it is the $2$-adic valuation of $2n$ for $n\in\mathbb{N}$, that is, the largest divisor of the form $2^{k}$ of $2n$.
In \cite{OEIS}, the reader 	will find many other fascinating properties and applications of the sequences
$a_{n}$ and $\tilde{a}_{n}$; in particular, if one truncates these sequences prior to the first occurrence
of the number $k \! \geqslant \! 2$, then their finite segments are palindromes.

Either one of the symbolic sequences allows us to derive the sequence $z_n$ for odd $n$. It is slightly easier to use
the symbolic representation~\eqref{eq:B-symbolic}. Let $n$ be an odd natural number; then, define
$k:=\lfloor\frac{n}{8}\rfloor$ and $p:=n\pmod{8}$, i.e., $p=1,3,5$, or $7$. For these remainders, define the function $q(p)$:
\begin{equation}\label{eq:q(p)}
q(1)=0,\quad
q(3)=2,\quad
q(5)=1,\quad
q(7)=2;
\end{equation}
then, we can write
\begin{equation}\label{eq:odd-fence-formula}
z_n=2k-\tilde{b}_k+q(p),\qquad\mathrm{where}\qquad
\tilde{b}_k=\sum_{l=1}^k\tilde{a}_l,
\end{equation}
with $\tilde{a_l}$ the sequence~\eqref{eq:tilde-an-def}. The sequence $\tilde{b}_k$, whose first few members are
\begin{equation}\label{eq:numbers-tilde-b-k}
\tilde{b}_0,\tilde{b}_1,\tilde{b}_2,\tilde{b}_3,\tilde{b}_4,\tilde{b}_5,\tilde{b}_6,\tilde{b}_7,\tilde{b}_8,\tilde{b}_9,
\tilde{b}_{10},\ldots=0, 1, 3, 4, 7, 8, 10, 11, 15, 16, 18,\ldots,
\end{equation}
is a very popular sequence in OEIS \cite{seq:A005187}: many occurrences of this sequence in various mathematical
problems, as well as different definitions and formulae for it, are known. For our purposes, it is convenient, to use
the following representation for the numbers $\tilde{b}_k$:
\begin{equation}\label{eq:tilde-b-k-formulae}
\tilde{b}_k=\sum_{m\geqslant0}\left\lfloor\frac{k}{2^m}\right\rfloor=k+\left\lfloor\frac{k}{2}\right\rfloor
+\left\lfloor\frac{k}{2^2}\right\rfloor+\left\lfloor\frac{k}{2^3}\right\rfloor+\ldots=k+\nu_2(k!),
\end{equation}
where we used Legendre's formula for the $2$-adic valuation of $k!$, namely, $\nu_2(k!)$ (see \cite{Eremin2019} for a recent
introduction).
Substituting \eqref{eq:tilde-b-k-formulae} into \eqref{eq:odd-fence-formula}, we arrive at the formula for $z_n$ for odd $n$:
\begin{equation}\label{eq:z-n-odd}
z_n=k-\sum_{m\geq1}\left\lfloor\frac{k}{2^m}\right\rfloor +q(p)=s_2(k)+q(p),\qquad
k=\left\lfloor\frac{n}{8}\right\rfloor,\quad
p=n\!\!\!\!\pmod{8},
\end{equation}
where $q(p)$ is defined in \eqref{eq:q(p)}, and $s_2(k)$ is the sum of digits of the number $k$ in the base-$2$ numeral system
(the binary representation).
Let's check this formula for the values of $z_n$ \eqref{eqs:large-values-oddfence} calculated directly with the help of {\sc Maple}:
\begin{gather*}
n=793,\;\;k=\left\lfloor\frac{793}{8}\right\rfloor=99=1100011_2,\;\;p=1,\;\;q(1)=0,\\
z_{793}=99-\left\lfloor\frac{99}{2}\right\rfloor-\left\lfloor\frac{99}{4}\right\rfloor-\left\lfloor\frac{99}{8}\right\rfloor
-\left\lfloor\frac{99}{16}\right\rfloor-\left\lfloor\frac{99}{32}\right\rfloor-\left\lfloor\frac{99}{64}\right\rfloor\\
=99-49-24-12-6-3-1=50-30-16=4.
\end{gather*}
To verify the other values of $z_n$ in \eqref{eqs:large-values-oddfence}, it suffices to add $q(3)=2$, $q(5)=1$, and
$q(7)=2$, respectively, to $z_{793}$.

A formula similar to \eqref{eq:odd-fence-formula}, but more complicated and with twice the period, can be derived from the
symbolic sequence~\eqref{eq:A-symbolic}: in the resulting formula, we would need the sequence of partial sums
\begin{equation}\label{eq:b-k-def}
b_k=\sum_{l=1}^k a_l,\quad k=0,1,2,\ldots.
\end{equation}
The first few members of this sequence are
\begin{equation}\label{eq:b-k-values}
b_0,b_1,b_2,b_3,b_4,b_5,b_6,b_7,b_9,b_{10},\ldots=
0,2,5,7,11,13,16,18,23,25,28,\ldots.
\end{equation}
This is the sequence A004134 in OEIS \cite{seq:A004134}.\footnote{\label{foot:A004134} The fact that it represents the partial
sums of the sequence A085058 is not mentioned in \cite{seq:A004134}.}

If one ignores the initial point $z_{1}=0$, then the smallest ``sticks'' of the fence have heights $1$.
These sticks are located at the points (and only at these points) with $x$-coordinates $2^{n}+1$, $n=2,3,\ldots$,
since $z_{2^{n}+1}=1$, as follows from equation~\eqref{eq:z-n-odd}. These sticks partition the fence into apportionments.
It is easy to observe that the areas of these parts are integer numbers. Denote the area of the $n$th part of the fence
by $S_{n}$, that is, $S_{n}$ for $n\geqslant2$ is a part of the fence built on the segment $[2^{n}+1,2^{n+1}+1]$. The first
element of the sequence $S_{n}$, namely, $S_{1}=5$, is built on the segment $[1,5]$. The initial terms of $S_{n}$ are as follows:
\begin{equation*}
5,6,18,44,104,240,544,\ldots.
\end{equation*}
The definition of the fence allows one to write the equation relating the areas $S_{n}$:
\begin{equation} \label{eq:Sn-sum}
S_{n+1} \! = \! 2^{n+1} \! - \! 1 \! + \! \sum_{k=1}^{n}S_{k}, \quad n \! \geqslant \! 2.
\end{equation}
Equation~\eqref{eq:Sn-sum} is valid for $n \! \geqslant \! 2$ because the first member of the sequence
$S_{n}$ is an ``irregular'' one ($z_{1} \! = \! 0$). If one uses equation~\eqref{eq:Sn-sum} with $n \!
\geqslant \! 3$ and subtracts two successive equations, then one arrives at the recurrence relation
\begin{equation} \label{eq:Sn-rec}
S_{n+1} \! = \! 2^{n} \! + \! 2S_{n}, \quad n \! \geqslant \! 3.
\end{equation}
Equation~\eqref{eq:Sn-rec}, with the initial condition $S_{3} \! = \! 18$, has the unique solution
\begin{equation}\label{eq:Sn-explicit}
S_{n}=(2n +3)2^{n-2}, \quad
n\geqslant3.
\end{equation}
We did not find the sequence~\eqref{eq:Sn-explicit} in OEIS; however, if we remove from each apportionment the
``basis'', that is, the rectangle corresponding to the sticks of the height 1 and of length $2^{n+1}+1-(2^n+1)=2^n$,
then the remaining area of the nth apportionment, $\tilde{S}_n$, is
\begin{equation}\label{eq:tildeSn}
\tilde{S}_n=S_n-2^n=(2n-1)2^{n-2},\quad
n\geq3.
\end{equation}
Starting from $n=3$, the sequence $\tilde{S}_n$ coincides with the sequence A128135 \cite{seq:A128135}.
%%%%%%%%%%%%%%%%%%%%%%%%%%%%%%%%%%%%%%%%%%%%%%%%%%%%%%%%%%%%%%%%%%%%%%%%%%%%%%%%%%%%%%%%%%%%%%%%%%%%%%%%%%%%%%%%%%%%%%%%%
\subsection{The Even Fence}
The behaviour of the sequence $z_{m}$ for even $m$ differs considerably from that of odd $m$. The fence
representing this behaviour can be actualized as follows.
%\begin{figure}
{\center
\begin{picture}(420,100)
\put(10,50){The initial part of the even fence}
\put(340,50){Shape $\mathcal{A}^0$}
\put(240,30){\circle*{2}}
\put(240,30){\line(1,0){20}}
\put(260,30){\circle*{2}}
\put(260,30){\line(1,3){20}}
\put(280,90){\circle*{2}}
\put(280,90){\line(1,-2){20}}
\put(300,50){\circle*{2}}
\put(220,10){\line(1,0){80}}
\put(240,10){\circle*{2}}
\put(260,10){\circle*{2}}
\put(280,10){\circle*{2}}
\put(300,10){\circle*{2}}
\put(218,0){0}
\put(238,0){2}
\put(258,0){4}
\put(278,0){6}
\put(298,0){8}
\put(300,10){\vector(1,0){10}}
\put(220,10){\line(0,1){90}}
\put(220,30){\circle*{2}}
\put(220,50){\circle*{2}}
\put(220,90){\circle*{2}}
\put(210,26){2}
\put(210,46){4}
\put(210,86){8}
\put(220,90){\vector(0,1){10}}
\end{picture}}
%\caption{bbb}\label{fig:1}
%\end{figure}
\\
To forge ahead with the construction, one needs to define the shapes $\mathcal{A}^1$ and $\mathcal{A}^2$.
%\begin{figure}
{\center
\begin{picture}(420,170)
\put(10,10){\circle*{2}}
\put(10,10){\line(2,1){20}}
\put(30,20){\circle*{2}}
\put(30,20){\line(1,1){20}}
\put(50,40){\circle*{2}}
\put(50,40){\line(2,5){20}}
\put(70,90){\circle*{2}}
\put(70,90){\line(1,-3){20}}
\put(90,30){\circle*{2}}
\put(90,30){\line(2,3){20}}
\put(110,60){\circle*{2}}
\put(110,60){\line(2,-1){20}}
\put(130,50){\circle*{2}}
\put(130,50){\line(1,1){20}}
\put(150,70){\circle*{2}}
\put(150,70){\line(1,0){20}}
\put(170,70){\circle*{2}}
\put(170,70){\line(1,1){20}}
\put(190,90){\circle*{2}}
\put(190,90){\line(1,-1){20}}
\put(210,70){\circle*{2}}
\put(210,70){\line(1,5){18}}
\put(228,160){\circle*{2}}
\put(228,160){\line(1,-3){23}}
\put(251,90){\circle*{2}}
\put(280,90){Shape $\mathcal{A}^{1} \! := \! \mathcal{A}^{1}_{l,r}$}
\end{picture}}
%\caption{bbb}\label{fig:1}
%\end{figure}
\\
The shape $\mathcal{A}^{1} \! := \! \mathcal{A}^{1}_{l,r}$ is a $13$-tuple of points connected by
straight-line segments. The respective heights of the consecutive points are: $z,z \! + \! 1,z \! + \! 3,
z \! + \! 3 \! + \! l,z \! + \! 2,z \! + \! 5,z \! + \! 4,z \! + \! 6,z \! + \! 6,z \! +\! 8,z \! + \! 6,z \! + \!
6 \! + \! r,z \! + \! 8$. Here, $l$ and $r$ are positive integers denoting the heights of the left and the
right towers of the shape, respectively; their $x$-coordinates are consecutive even integers. In the
figure above, $l \! = \! 5$ and $r \! = \! 9$.
%\begin{figure}
{\center
\begin{picture}(420,130)
\put(10,10){\circle*{2}}
\put(10,10){\line(2,1){20}}
\put(30,20){\circle*{2}}
\put(30,20){\line(1,1){20}}
\put(50,40){\circle*{2}}
\put(50,40){\line(2,3){20}}
\put(70,70){\circle*{2}}
\put(70,70){\line(1,-2){20}}
\put(90,30){\circle*{2}}
\put(90,30){\line(2,3){20}}
\put(110,60){\circle*{2}}
\put(110,60){\line(2,-1){20}}
\put(130,50){\circle*{2}}
\put(130,50){\line(2,3){20}}
\put(150,80){\circle*{2}}
\put(150,80){\line(2,-1){20}}
\put(170,70){\circle*{2}}
\put(170,70){\line(1,1){20}}
\put(190,90){\circle*{2}}
\put(190,90){\line(1,-1){20}}
\put(210,70){\circle*{2}}
\put(210,70){\line(2,5){20}}
\put(230,120){\circle*{2}}
\put(230,120){\line(2,-3){20}}
\put(250,90){\circle*{2}}
\put(280,90){Shape $\mathcal{A}^{2} \! := \! \mathcal{A}^{2}_{m}$}
\put(150,20){\vector(0,1){30}}
\put(150,10){Middle tower with $m \! = \! 0$}
\end{picture}}
%\caption{bbb}\label{fig:1}
%\end{figure}
\\
The shape $\mathcal{A}^{2} \! := \! \mathcal{A}^{2}_{m}$ is a $13$-tuple of points connected by
straight-line segments. The respective heights of the consecutive points are: $z,z \! + \! 1,z \! + \! 3,
z \! + \! 6,z \! + \! 2,z \! + \! 5,z \! + \! 4,z \! + \! 7 \! + \! m,z \! + \! 6,z \! + \! 8,z \! + \! 6,z \!
+ \! 11,z \! + \! 8$. Here, $m$ is a positive integer denoting the heights of the middle tower of the
shape. This tower corresponds to the $8$th point from the left (or the $6$th point from the right).

Using the shapes $\mathcal{A}^{k}$, $k=0,1,2$, the design of the even fence can be delineated as follows:
put the left end-point of the shape $\mathcal{A}^{0}$ at the point $(2,2)$, then glue its right
end-point located at $(8,4)$ to the left end-point of $\mathcal{A}^{1}$, then glue the right
end-point of $\mathcal{A}^{1}$ located at $(32,12)$ to the left end-point $\mathcal{A}^{2}$, then the right end-point of
$\mathcal{A}^{2}$ located at $(56,20)$ to the left end-point of $\mathcal{A}^{1}$, then the right end-point of
$\mathcal{A}^{1}$ located at $(80,28)$ to the left end-point of $\mathcal{A}^{2}$, etc., so that one has an infinite sequence
of the permuting shapes $\mathcal{A}^{1}$ and $\mathcal{A}^{2}$:
\begin{equation} \label{eq:evenFenceSymbolic}
\mathcal{A}^{0} \mathcal{A}^{1}_{l_{1},r_{1}} \mathcal{A}^{2}_{m_{1}} \mathcal{A}^{1}_{l_{2},r_{2}}
\mathcal{A}^{2}_{m_{2}} \mathcal{A}^{1}_{l_{3},r_{3}} \mathcal{A}^{2}_{m_{3}} \ldots.
\end{equation}
To complete the definition of the fence~\eqref{eq:evenFenceSymbolic}, it remains to define the sequences $l_{k}$,
$r_{k}$, and $m_{k}$, $k\in\mathbb{N}$. The first few members of these sequences are:
\begin{align*}
l_{k}:& \quad 5,9,5,7,5,13,5,7,5,9,5,7,5,11,5,7,\ldots, \\
r_{k}:& \quad 9,7,13,7,9,7,11,7,9,7,17,7,9,7,11,7,\ldots, \\
m_{k}:& \quad 0,1,0,2,0,1,0,3,0,1,0,2,0,1,0,\ldots.
\end{align*}
Begin with the definition of the sequences $l_{k}$ and $r_{k}$. It is easy to observe that $l_{k} \! = \! 5$
for $k$ odd. If one deletes these odd members of $l_{k}$ and renumerates the resulting sequence, then
one arrives at the sequence $r_{k}$. Clearly, given the sequence $r_{k}$, one can uniquely restore $l_{k}$;
therefore, the sequence $r_{k}$ has to be defined. The number $r_{k}$ defines the height of the $k$th
right tower of the shape $\mathcal{A}^{1}$. This tower has $x$-coordinate $48k-18$. The formula for $r_{k}$ reads:
\begin{equation} \label{eq:rk-def}
r_{k} \! = \! 2a_{3k-1} \! + \! 3,
\end{equation}
where the sequence $a_{k}$ is defined in the paragraph above equation~\eqref{eq:ak-def}; for example,
using equation~\eqref{eq:rk-def}, one finds that $r_{27}=15$: it is the first occurrence of the number
$15$ in the sequence $r_{k}$. To verify this result numerically, one has to calculate $48\times27-18=1278$
polynomials $c_m(c_1)$.

The sequence $m_{k}$ can be presented as follows:
\begin{equation*}
0,\tilde{a}_{1},0,\tilde{a}_{2},0,\tilde{a}_{3},0,\tilde{a}_{4},0,\ldots,
\end{equation*}
where the sequence $\tilde{a}_{k}$ is defined by equation~\eqref{eq:tilde-an-def}.

The symbolic sequence~\eqref{eq:evenFenceSymbolic} can be rewritten in terms
of explicit formulae as follows: the first three values of the sequence $z_{2n}$ are the offset numbers (cf. shape
$\mathcal{A}^0$) $z_2=z_4=2$ and $z_6=8$; the sequence $z_{2n}$, beginning from $n=4$, can be presented as the nine regular
subsequences ($k=0,1,2,3,\ldots$)
\begin{align*}
z_{8+24k} &= 4 \! + \! 8k, & z_{10+24k} &= 5 \! + \! 8k, & z_{12+24k} &= 7 \! + \! 8k, & \\
z_{16+24k} &= 6 \! + \! 8k, & z_{18+24k} &= 9 \! + \! 8k, & z_{20+24k} &= 8 \! + \! 8k, & \\
z_{24+24k} &= 10 \! + \! 8k, & z_{26+24k} &= 12 \! + \! 8k, & z_{28+24k} &= 10 \! + \! 8k,&
\end{align*}
and the three singular subsequences
\begin{align*}
z_{14+48k}&=7+l_{k+1}+16k,&&z_{14+24(2k+1)}=18+16k,\\
z_{30+48k}&=10+r_{k+1}+16k,&&z_{30+24(2k+1)}=23+16k,\\
z_{22+48k}&=10+16k,&&z_{22+24(2k+1)}=19+m_{k+1}+16k,
\end{align*}
where the sequences $l_k$, $m_k$, and $r_k$ are defined above.
%%%%%%%%%%%%%%%%%%%%%%%%%%%%%%%%%%%%%%%%%%%%%%%%%%%%%%%%%%%%%%%%%%%%%%%%%%%%%%%%%%%%%%%%
%%%%%%%%%%%%%%%%%%%%%%%%%%%%%%%%%%%%%%%%%%%%%%%%%%%%%%%%%%%%%%%%%%%%%%%%%%%%%%%%%%%%%%%%
\section{The Monodromy Data} \label{sec:monodromy}
The goal of this section is to calculate the monodromy data for the vanishing meromorphic solutions corresponding to $a=\pm\mi/2$.
These data are used to find asymptotics of the solutions as $\tau\to+\infty$; some examples of such asymptotics are presented
in Appendix~\ref{app:pictures}.

In \cite{KitVar2004} we considered an isomonodromy deformation system whose solution is given in terms of the
pair of functions $(u(\tau),\me^{\mi\varphi(\tau)})$, where $u(\tau)$ is the degenerate third Painlev\'e function solving
equation~\eqref{eq:dp3}, and $\varphi(\tau)$ is the indefinite integral of $1/u(\tau)$. In \cite{KitVar2004} we defined the
monodromy manifold that can be presented in terms of the associated monodromy data. Consider $\mathbb{C}^{8}$ with
co-ordinates $(a,s_{0}^{0},s_{0}^{\infty},s_{1}^{\infty},g_{11},g_{12},g_{21},g_{22})$, where the parameter of
formal monodromy, $a$, the Stokes multipliers, $s_{0}^{0}$, $s_{0}^{\infty}$, and $s_{1}^{\infty}$, and the elements
of the connection matrix, $(G)_{ij} \! =: \! g_{ij}$, $i,j \! \in \! \lbrace 1,2 \rbrace$, are
called the \emph{monodromy data}. These monodromy data are related by the set of algebraic equations
\begin{gather}
s_{0}^{\infty}s_{1}^{\infty} \! = \! -1 \! - \! \me^{-2 \pi a} \! - \! \mi s_{0}^{0}
\me^{-\pi a}, \label{eq:monodromy:s} \\
g_{21}g_{22} \! - \! g_{11}g_{12} \! + \! s_{0}^{0}g_{11}g_{22} \! = \! \mi \me^{-\pi a},
\label{eq:monodromy:main} \\
g_{11}^{2} \! - \! g_{21}^{2} \! - \! s_{0}^{0} g_{11} g_{21} \! = \! \mi s_{0}^{\infty}
\me^{-\pi a}, \label{eq:monodromy:s0} \\
g_{22}^{2} \! - \! g_{12}^{2} \! + \! s_{0}^{0} g_{12} g_{22} \! = \! \mi s_{1}^{\infty}
\me^{\pi a}, \label{eq:monodromy:s1} \\
g_{11}g_{22} \! - \! g_{12} g_{21} \! = \! 1. \label{eq:monodromy:detG}
\end{gather}
The system~\eqref{eq:monodromy:s}--\eqref{eq:monodromy:detG} defines an algebraic variety which we call
the \emph{manifold of the monodromy data}, $\mathscr{M}$.
The formal monodromy $a$ enters into the system~\eqref{eq:monodromy:s}--\eqref{eq:monodromy:detG} in the form $\me^{\pi a}$.
It is, of course, possible to denote this exponent by some new variable and to arrive at a purely algebraic system; however,
it is more convenient to treat $\me^{\pi a}$ as a parameter in the system of algebraic equations because the particular
choice of this parameter means that we have chosen a particular equation~\eqref{eq:dp3}. Only four of the five equations
defining $\mathscr{M}:=\mathscr{M}_a$ are independent, so that $\dim_{\mathbb{C}}\mathscr{M}_a=3$. What is important to know is
that, to each pair of functions $(u(\tau),\me^{\mi\varphi(\tau)})$, there correspond two different points on $\mathscr{M}_a$,
that is, $(a,s_{0}^{0},s_{0}^{\infty},s_{1}^{\infty}, g_{11},g_{12},g_{21},g_{22})$ and
$(a,s_{0}^{0},s_{0}^{\infty},s_{1}^{\infty}, -g_{11},-g_{12},-g_{21},-g_{22})$. This occurs because the definition of
the canonical solution $X_k^0(\mu)$ at $\mu=0$ (see \cite{KitVar2004}, p.1170, equations (16) and (19)) contains the term
$\sqrtsign{B(\tau)}$, where the function $B(\tau)$ is one of the coefficient functions defining the isomonodromy deformations
of the auxiliary linear matrix ODE. The branch of this square root can be chosen arbitrarily, while
the isomonodromy deformation system remains unchanged; however, the connection matrix, $G$, changes sign, $G\to-G$.
The manifold $\mathscr{M}_a$ is convenient to use when working with the system of isomonodromy deformations, since all
its co-ordinates have a clearly recognizable sense in terms of the monodromy data; however, when there is no need to address
these data, it may be more suitable to use a manifold that uniquely parametrizes the corresponding pair of functions
$(u(\tau),\me^{\mi\varphi(\tau)})$: we call such a manifold the \emph{contracted monodromy manifold},
$\widetilde{\mathscr{M}}_a$. It can be defined as follows:
\begin{gather*}
\tilde{g}_1:=\mi g_{12}g_{11},\quad
\tilde{g}_2:=\mi g_{21}g_{22},\quad
\tilde{g}_3:=g_{11}g_{22},\quad
\tilde{g}_4:=g_{12}g_{21},\\
\tilde{s}:=1+\mi s_0^0,\quad
\tilde{f}_1:=g_{12}^2,\quad
\tilde{f}_2:=g_{21}^2.
\end{gather*}
The tilde-variables satisfy the following system of algebraic equations:
\begin{gather*}
\tilde{g}_1-\tilde{g}_2+(1-\tilde{s})\tilde{g}_3=\me^{-\pi a},\quad
\tilde{g}_3\tilde{g}_4=-\tilde{g}_1\tilde{g}_2,\quad
\tilde{g}_3-\tilde{g}_4=1,\quad
\tilde{f}_1\tilde{f}_2=\tilde{g}_4^2.
\end{gather*}
One can further exclude the variables $\tilde{g}_2$ and $\tilde{g}_4$ in order to consider $\widetilde{\mathscr{M}}_a$
as embedded in $\mathbb{C}^5$:
\begin{equation}\label{eqs:contractedManifold}
\tilde{g}_3^2+\tilde{g}_1^2+(1-\tilde{s})\tilde{g}_1\tilde{g}_3=\tilde{g}_3+\tilde{g}_1\me^{-\pi a},\quad
\tilde{f}_1\tilde{f}_2=(\tilde{g}_3-1)^2.
\end{equation}
This represents a separation of the monodromy variables; the first equation in \eqref{eqs:contractedManifold} defines the
function $u(\tau)$, and, subsequently, the second equation defines the constant of integration of the associated indefinite
integral in terms of the monodromy variables. The last fact is helpful for finding various definite integrals of $1/u(\tau)$
\cite{KitVar2019,KitVar2023}. In \cite{KitVar2023} we showed that the projectivization of the first
equation in \eqref{eqs:contractedManifold} is a singular cubic surface with a singularity of type $A_3$.

The aforementioned comments regarding the monodromy manifold concern the generic setting for the parameters,
$a\in\mathbb{C}$, $\varepsilon=\pm1$, and $\varepsilon b\in\mathbb{R}_+$. For the remainder of this section, we assume that
$a=\pm\mi/2$, and the normalization $a=b$ is no longer assumed.

By using the change of variables~\eqref{eq:changevar}, we rewrite the expansion~\eqref{eq:taylor-expansion} in terms of the
original variables in \eqref{eq:dp3}:
\begin{equation} \label{eq:uSuleimanov:general}
u(\tau)\underset{\tau\to0}{=} -\frac{b \tau}{2a}\!\left(\tilde{c}_{0}+\tilde{c}_{1}\tau+
\sum_{m=2}^{\infty}\tilde{c}_{m}\tau^{m}\right),
\end{equation}
where $\tilde{c}_{0}=1$, and $\tilde{c}_{1}$ is a complex parameter. For  $a=\kappa\mi/2$, $\kappa=\pm1$, the coefficients
$\tilde{c}_m$, $m\geqslant2$, can be determined uniquely via the recurrence relation
\begin{equation} \label{eq:tilde-c-m-recurrence}
(m^2-1)\tilde{c}_{m}=\sum_{k=0}^{m-2}(k+2)(m-2(k+1))\tilde{c}_{k+1}\tilde{c}_{m-k-1}
-8\kappa\,\varepsilon b\,\mi\sum_{k=0}^{m-2}\sum_{l=0}^{k}\tilde{c}_{m-k-2}\tilde{c}_{l}\tilde{c}_{k-l}.
\end{equation}
In terms of the coefficients $c_m$ studied in Sections~\ref{sec:Taylor}--\ref{sec:fence}, the coefficients $\tilde{c}_m$ are
given by the following formula:
\begin{equation}\label{eq:c-m-tilde}
\tilde{c}_m=\alpha^m c_m.
\end{equation}
The first few members of this sequence can be written as follows:
\begin{equation}\label{eq:c2-c4tilde}
C:=-8\kappa\,\varepsilon b\,\mi,\quad
\tilde{c}_2=\frac{C}{3},\quad
\tilde{c}_3=\frac{C}{3}\tilde{c}_1,\quad
\tilde{c}_4=\frac{C}{3}\left(\frac{\tilde{c}_1^2}{3}+\frac{C}{5}\right).
\end{equation}
\begin{lemma} \label{lem:mondata-u-pm-id2}
Let $u(\tau)$ denote solutions of equation~\eqref{eq:dp3} for $\varepsilon b>0$, $a=\kappa\mi/2$, $\kappa=\pm1$,
defined by the asymptotic expansion~\eqref{eq:uSuleimanov:general}. Then, the monodromy data of these solutions reads:
\begin{equation} \label{eqs:i/2mondata}
\begin{gathered}
\pmb{(\mi)}\quad
a=\frac{\mi}{2},\quad
s_{0}^{0}=s_{1}^{\infty}=0,\quad
s_{0}^{\infty}g_{12}^2=\frac{\sqrt{\pi}\me^{\mi\pi/4}\tilde{c}_1}{2^{3/2}(\varepsilon b)^{1/2}}\equiv X\me^{\mi\pi/4},\quad
g_{22}=-g_{12}\neq0,\\
\tilde{g}_1=\mi g_{12}g_{11}=-\frac{\mi}{2}(1+X\me^{\mi\pi/4}),\quad
\tilde{g}_2=\mi g_{21}g_{22}=\frac{\mi}{2}(1-X\me^{\mi\pi/4}),\quad\\
\tilde{g}_3=g_{11}g_{22}=\frac12(1+X\me^{\mi\pi/4}),\quad
\tilde{g}_4=g_{12}g_{21}=-\frac12(1-X\me^{\mi\pi/4});
\end{gathered}
\end{equation}
and
\begin{equation}\label{eqs:-i/2mondata}
\begin{gathered}
\pmb{(\mi\mi)}\quad
a=-\frac{\mi}{2},\quad\!
s_{0}^{0}=s_{0}^{\infty}\!=0,\quad\!
s_{1}^{\infty}g_{21}^{2}=\frac{\sqrt{\pi}\me^{-\mi\pi/4}\tilde{c}_1}{2^{3/2}(\varepsilon b)^{1/2}}\equiv X\me^{-\mi\pi/4},\quad\!
g_{11}=-g_{21}\neq0, \\
\tilde{g}_1=\mi g_{12}g_{11}=\frac{\mi}{2}(1-X\me^{-\mi\pi/4}),\quad
\tilde{g}_2=\mi g_{21}g_{22}=-\frac{\mi}{2}(1+X\me^{-\mi\pi/4}),\\
\tilde{g}_3=g_{11}g_{22}=\frac12(1+X\me^{-\mi\pi/4}),\quad
\tilde{g}_4=g_{12}g_{21}=-\frac12(1-X\me^{-\mi\pi/4}).\\
\end{gathered}
\end{equation}
\end{lemma}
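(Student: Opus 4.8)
The plan is to combine the purely algebraic structure of the monodromy manifold $\mathscr{M}_a$ with a \emph{limiting} application of the general small-$\tau$ connection results (Theorems 3.4 and 3.5 of \cite{KitVar2004}) at the resonant value $a=\kappa\mi/2$. The guiding observation is that the leading term of \eqref{eq:uSuleimanov:general}, namely $-\tfrac{b\tau}{2a}$, is holomorphic with a coefficient that does not depend on the family parameter $\tilde{c}_1$; hence the solution always lives on one fixed (degenerate, non-oscillatory, logarithm-free) stratum of $\mathscr{M}_a$, while $\tilde{c}_1$ must be extracted from the next order. Concretely, I would first substitute $a=\kappa\mi/2$ into \eqref{eq:monodromy:s}--\eqref{eq:monodromy:detG} using $\me^{-\pi a}=-\kappa\mi$ and $\me^{-2\pi a}=-1$. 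For $\kappa=+1$ the right-hand side of \eqref{eq:monodromy:s} collapses to $-s_0^0$; one then argues that the Stokes multipliers $s_0^0$ and $s_1^\infty$ are governed by the $\tilde{c}_1$-independent leading behaviour and therefore coincide with their values for the odd ($\tilde{c}_1=0$) solution already known from \cite{KitSIGMA2019}, giving $s_0^0=s_1^\infty=0$ (and $s_0^0=s_0^\infty=0$ for $\kappa=-1$ by the symmetry $a\to-a$).

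With these vanishing conditions the remaining equations become purely algebraic. Equation \eqref{eq:monodromy:s1} gives $g_{22}^2=g_{12}^2$, and subtracting \eqref{eq:monodromy:detG} from \eqref{eq:monodromy:main} yields the factorization $(g_{22}+g_{12})(g_{21}-g_{11})=0$; the branch $g_{22}=g_{12}$ is excluded because together with $g_{21}=g_{11}$ it would force $\det G=0$, contradicting \eqref{eq:monodromy:detG}. Hence $g_{22}=-g_{12}\neq0$. At this point \eqref{eq:monodromy:detG} reads $\tilde{g}_3-\tilde{g}_4=1$, while \eqref{eq:monodromy:s0} combined with $(g_{11}+g_{21})g_{12}=-1$ expresses the single surviving free parameter as $s_0^\infty g_{12}^2=\tilde{g}_3+\tilde{g}_4$. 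Consequently every tilde-coordinate is pinned down by this one combination: $\tilde{g}_3=\tfrac12(1+s_0^\infty g_{12}^2)$, $\tilde{g}_4=-\tfrac12(1-s_0^\infty g_{12}^2)$, and $\tilde{g}_1=-\mi\tilde{g}_3$, $\tilde{g}_2=-\mi\tilde{g}_4$ from the definitions. (Only the products appear, consistent with the $G\to-G$ ambiguity noted in the text, so that $\widetilde{\mathscr{M}}_a$ is the correct invariant object.)

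The crux is therefore to evaluate the one number $s_0^\infty g_{12}^2$ in terms of $\tilde{c}_1$. This is exactly the difficulty flagged in the Introduction: the standard connection formulae relate the monodromy data to the \emph{leading} small-$\tau$ coefficient, whereas $\tilde{c}_1$ resides in the subleading term. My plan is to apply the generic-$a$ asymptotic expansion of \cite{KitVar2004} in a punctured neighbourhood of $a=\kappa\mi/2$, where the oscillatory exponent becomes resonant (an integer) and the subleading power merges with the $\tau^2$-term of \eqref{eq:uSuleimanov:general}. Taking the limit $a\to\kappa\mi/2$ of the corresponding connection coefficient, with the monodromy data tuned along the family so that the solution remains the vanishing one, should give a finite expression whose Gamma-function factors, evaluated at the half-integer resonance, produce the $\sqrt{\pi}=\Gamma(1/2)$; it is cleanest to carry this out in the $y(x)$-variables, where I expect the relation to reduce to $s_0^\infty g_{12}^2=\tfrac{\sqrt{\pi}}{2}c_1$, after which the change of variables \eqref{eq:changevar} with $\alpha=\sqrt{-2\kappa\varepsilon b\,\mi}$ and the dictionary $\tilde{c}_m=\alpha^m c_m$ reinstates the phase $\me^{\kappa\mi\pi/4}$ and the powers $2^{-3/2}(\varepsilon b)^{-1/2}$ in the stated form $X\me^{\kappa\mi\pi/4}$. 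I expect this resonant-limit step to be the main obstacle, since one must show both that the procedure yields a \emph{finite} coefficient and that its value is precisely the stated Gamma/phase combination, and this genuinely requires controlling the next-to-leading asymptotics rather than the leading order alone.

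Finally I would assemble the result: inserting the value of $s_0^\infty g_{12}^2$ into the formulae of the second paragraph reproduces all of $\tilde{g}_1,\dots,\tilde{g}_4$ in \eqref{eqs:i/2mondata}, and the case $\kappa=-1$ follows verbatim by the symmetry $a\to-a$, which interchanges $(s_0^\infty,g_{12})$ with $(s_1^\infty,g_{21})$, turns $g_{22}=-g_{12}$ into $g_{11}=-g_{21}$, and conjugates the phase $\me^{\mi\pi/4}\to\me^{-\mi\pi/4}$, giving \eqref{eqs:-i/2mondata}.
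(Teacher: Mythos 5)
Your algebraic reductions are essentially correct and in fact mirror the computation that closes the paper's proof: with $s_0^0=s_1^\infty=0$ the factorization $(g_{22}+g_{12})(g_{21}-g_{11})=0$, the exclusion of the spurious branch via \eqref{eq:monodromy:detG}, and the identity $s_0^\infty g_{12}^2=\tilde{g}_3+\tilde{g}_4$ together with $\tilde{g}_3-\tilde{g}_4=1$ all check out. But the two analytic inputs on which everything rests are not established, and they are precisely where the difficulty lives. First, your claim that the Stokes multipliers ``are governed by the $\tilde{c}_1$-independent leading behaviour and therefore coincide with their values for the odd solution'' is not a proof; there is no principle by which monodromy data depend only on the leading Taylor coefficient, and indeed $s_0^\infty g_{12}^2$ itself depends on the subleading parameter $\tilde{c}_1$. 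Second, and more seriously, your plan for the crux --- evaluating $s_0^\infty g_{12}^2$ by applying the generic-$a$ connection formulae near $a=\kappa\mi/2$ and taking a resonant limit --- runs into two obstructions. The one-parameter family of vanishing solutions exists \emph{only} at the quantized values $a=\mi/2+\mi n$; for nearby generic $a$ the vanishing solution is unique and odd, so there is no family carrying $\tilde{c}_1$ along which to tune the monodromy data, and one would instead have to control generic solutions whose two exponents $\tau^{\pm4\varrho}$ merge. That is exactly the double-asymptotics problem the paper documents in Remark~\ref{rem:lemma-proof}: with only the error estimate $o(\tau^\delta)$, $\delta>0$ unspecified, in \eqref{eq:u-asympt0-generic}, one cannot match at subleading order, cannot even conclude $\varrho$ is real, and the authors can only \emph{conjecture} the sharper estimate $\mathcal{O}(\tau^{8\varrho})$ that would make such an argument work. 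You acknowledge this step is ``the main obstacle,'' but acknowledging it does not close it; as written, your proposal proves the lemma only modulo the same open conjecture the paper deliberately avoids relying on.

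The paper's actual device is different and is the one idea your proposal is missing: the B\"acklund (Schlesinger) transformation \eqref{eq:Baecklund-solutions}, which maps the vanishing solution at $a=\mi/2$ to a solution $\hat{u}(\tau)$ at $a=-\mi/2$ that is \emph{non-vanishing} at the origin, with leading term $\varepsilon\tilde{c}_1/4$ (see \eqref{eq:hat-u-neq0-at0}). This relocates $\tilde{c}_1$ into the leading coefficient, where Theorem~B.1 of \cite{KitVar2023} applies with $\varrho=1/4$ for \emph{any} $\delta>0$, yielding \eqref{eq:tilde-c1-varpi} and hence \eqref{eq:tilde-c1-mondata-hat}; the vanishing of $\hat{s}_0^0$ then follows from equation (B.3) with $\varrho=1/4$ rather than from any leading-order heuristic, the branch selection $\hat{g}_{22}=\hat{g}_{12}$ in \eqref{eq:hat-g-factor} is justified because \eqref{eq:tilde-c1-mondata-hat} forces $\hat{g}_{11}+\hat{g}_{21}\neq0$ when $\tilde{c}_1\neq0$, and the known action \eqref{eq:Baecklund-monodromy} of the transformation on the monodromy manifold transports the data back to $a=\mi/2$ (with the case $\tilde{c}_1=0$ covered by \cite{KitSIGMA2019}). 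In short: your outline identifies the right target quantity and handles the algebra correctly, but at the decisive analytic step it substitutes an unexecuted --- and, given the current error estimates, unexecutable --- resonant-limit scheme for the B\"acklund trick that makes the paper's proof rigorous.
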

\begin{proof}
The proof of this lemma relies substantially on Theorem~3.4 of \cite{KitVar2004}; however, in this proof, we refer to
Theorem~B.1 of our recent work \cite{KitVar2023}, where we simplified the notation and some
formulae.\footnote{\label{foot:theorem3.4} The reader can use the results stated in Theorem 3.4 of \cite{KitVar2004}
to arrive at the same formulae.}
Consider, say, the proof for the case $a=+\mi/2$, since the proof for
$a=-\mi/2$ is, \emph{mutatis mutandis}, similar. According to Theorem~B.1 of \cite{KitVar2023}, a solution $u(\tau)$
corresponding to a generic set of the monodromy data has the following small-$\tau$ asymptotic behaviour:
\begin{align} \label{eq:u-asympt0-generic}
u(\tau)\underset{\tau\to0}{=}&\,\frac{\tau b\me^{\pi a/2}}{16\pi}\!\left(
\varpi_{1}(-\varrho)\varpi_{2}(-\varrho)\tau^{-4\varrho}+\varpi_{1}(-\varrho)\varpi_{2}(\varrho)\right.
\nonumber \\
+&\left. \, \varpi_{1}(\varrho)\varpi_{2}(-\varrho)+\varpi_{1}(\varrho)\varpi_{2}(\varrho)\tau^{4\varrho}\right)\!
\left(1+o(\tau^{\delta})\right),
\end{align}
where $\delta>0$, $a=\mi/2$, the parameter $\varrho\neq0$, with $|\mathrm{Re}(\varrho)|<1/2$,
and the $\tau$-independent functions $\varpi_{k}(\ast)$, $k=1,2$, are expressed in terms
of the monodromy data by equations (B.3) and (B.6)--(B.8), respectively, of \cite{KitVar2023}. The other possible asymptotic
behaviours as $\tau\to0$ stated in the original paper \cite{KitVar2004} are, obviously, incompatible with the
expansion~\eqref{eq:uSuleimanov:general}.\footnote{\label{foot:theorem3.5ofcite2004} For $\varrho=0$,
we refer to Theorem~3.5 of \cite{KitVar2004}, which states that there exists a one-parameter family of
solutions with logarithmic behaviour as $\tau\to0$: the requirement for the absence of logarithmic terms contradicts
equation~\eqref{eq:monodromy:detG}. For $\varrho=1/2$, $1/u(\tau)\sim-4\varepsilon\tau\ln^2\tau$ as $\tau\to0$.}
The asymptotic formula~\eqref{eq:u-asympt0-generic} is invariant under the reflection
symmetry $\varrho\to-\varrho$; therefore, without loss of generality, we assume that $0<\mathrm{Re}(\varrho)<1/2$.
\begin{remark}\label{rem:lemma-proof}
In this remark, we digress from the proof in order to explain the problem associated with the application of the
asymptotics~\eqref{eq:u-asympt0-generic} for finding a relation between the parameter $\tilde{c}_1$ and the monodromy data.
The asymptotics~\eqref{eq:u-asympt0-generic} was obtained in \cite{KitVar2004} for finding a relation between the coefficients
of the local expansion of the general solution of equation~\eqref{eq:dp3} and the monodromy data. For that purpose, the value
of the parameter $\delta>0$ in the error estimate of \eqref{eq:u-asympt0-generic} is not important, because all the necessary
parameters are contained in the leading term of the corresponding local expansion. An example of the application of
\eqref{eq:u-asympt0-generic} in such a ``standard situation'' is discussed in detail in \cite{KitVar2023}.
In the context of the present study, though, the situation is more complicated. In order for the asymptotic expansions
\eqref{eq:uSuleimanov:general} and \eqref{eq:u-asympt0-generic} to match, the following system must be valid:
\begin{equation}\label{sys: asympt0-degeneration-general}
\varpi_{1}(-\varrho)\varpi_{2}(-\varrho)=0,\quad
\varpi_{1}(-\varrho)\varpi_{2}(\varrho)+\varpi_{1}(\varrho)\varpi_{2}(-\varrho)=16\pi\me^{\pi\mi/4}.
\end{equation}
The system~\eqref{sys: asympt0-degeneration-general} is equivalent to the fact that at least one of the following two systems
is satisfied:
\begin{gather}
\varpi_{1}(-\varrho)=0,\quad
\varpi_{1}(\varrho)\varpi_{2}(-\varrho)=16\pi\me^{\pi\mi/4},\label{sys:degenerate1}\\
\varpi_{2}(-\varrho)=0,\quad
\varpi_{1}(-\varrho)\varpi_{2}(\varrho)=16\pi\me^{\pi\mi/4}\label{sys:degenerate2}.
\end{gather}
Consider the system~\eqref{sys:degenerate1}: the first equation (cf. equations (B.6)--(B.8) of \cite{KitVar2023})
implies that
\begin{equation}\label{eq:g11g21}
g_{11}=-g_{21}\me^{2\pi\mi(\varrho-1/4)}.
\end{equation}
Substituting, now, \eqref{eq:g11g21} into the second equation of the system~\eqref{sys:degenerate1}, where
$\varpi_{2}(-\varrho)$ is given by equations (B.6)--(B.8) of \cite{KitVar2023},
and using \eqref{eq:monodromy:detG}, we remove all the $g_{ik}$ variables from this equation and arrive at an equation
that contains only the parameter $\varrho$:
\begin{equation}\label{eq:varrho1}
\frac{\sin(2\pi\varrho)}{2\varrho}\frac{(\varrho-1/4)}{\sin(\pi(\varrho-1/4))}=-4\varrho\me^{-\pi\mi(\varrho+1/4)}.
\end{equation}
Recall that equation~\eqref{eq:varrho1} is derived under the assumption $a=+\mi/2$ and we are interested in solutions
for which $0<\mathrm{Re}\,\varrho<1/2$: the obvious solution $\varrho=-1/4$ does not, therefore, fit into our construction.
The second ratio on the left-hand side of equation~\eqref{eq:varrho1} originates from the corresponding identity for
the $\Gamma$-function, and, therefore,  for $\varrho=1/4$, must be understood in the limiting sense, i.e., it equals
$1/\pi$; thus, $\varrho=1/4$ is also not a root of this equation. Our numerical studies using {\sc Maple} allow us to find
only one---mysterious!---root of equation~\eqref{eq:varrho1} in the interval $0<\mathrm{Re}\,\varrho<1/2$:
\begin{equation}\label{eq:root-varrho1}
\varrho_1=0.30116884436547816\ldots -\mi\,0.1989138937847074\ldots.
\end{equation}
Now, consider the system~\eqref{sys:degenerate2}. The solution is quite analogous; in this case, instead of
equations \eqref{eq:g11g21} and \eqref{eq:varrho1}, we get
\begin{gather}
g_{22}=-g_{12}\me^{-2\pi\mi(\varrho-1/4)},\label{eq:g22g12}\\
\frac{\sin(2\pi\varrho)}{2\varrho}\frac{(\varrho+1/4)}{\sin(\pi(\varrho+1/4))}=4\varrho\me^{\pi\mi(\varrho-1/4)}.
\label{eq:varrho2}
\end{gather}
Equation~\eqref{eq:varrho2} differs from \eqref{eq:varrho1} by the reflection $\varrho\to-\varrho$; however, the condition
$0<\mathrm{Re}\,\varrho<1/2$ holds for the last equation, too. Thus, $\varrho=1/4$ is now the ``legitimate'' root:
had we known \emph{a priori} that $\varrho$ is real, then we would conclude that it is the only root. Our numerical studies
do not indicate that any other roots of equation~\eqref{eq:varrho2} for $\mathrm{Re}\,\varrho\in(0,1/2)$ exist.
We found two roots,
\begin{equation}\label{eq:extra-roots}
\varrho_2=0.75580947\ldots-\mi0.06115553\ldots\quad\text{and}\quad
\varrho_3=1.20069834\ldots+\mi\,0.35281941\ldots,
\end{equation}
which do not belong to the set of admissible values for $\varrho$.
After this step, we can not proceed with the analysis of equation~\eqref{eq:u-asympt0-generic} because of the
indeterminacy of $\delta>0$ in the error estimate; we can not even conclude that $\varrho$ is real!

Our experience, however, allows us to formulate a conjecture that considerably clarifies the application of
the asymptotics~\eqref{eq:u-asympt0-generic} for determining the monodromy data: \emph{two leading terms of the three
explicitly presented in \eqref{eq:u-asympt0-generic} give rise to the first two leading-order terms of the
asymptotics for $u(\tau)$, and the error estimate, $o(\tau^{\delta})$, can be improved to a sharper estimate, like, say,
$\mathcal{O}\big(\tau^{8\varrho}\big)$}.\footnote{\label{foot:conjecture-asympt0} It seems that, in order to verify this
conjecture, one has to follow the corresponding proof presented in \cite{KitVar2004}, paying closer attention to the
error estimates. We hope to check this conjecture soon.}

Assume that the conjecture formulated above is true. Then, $\varrho=1/4$ (note that, in this case, $\delta=8\times1/4=2$),
and the system~\eqref{sys:degenerate2} is satisfied, so that, according to equation~\eqref{eq:g22g12}, $g_{22}=-g_{12}$,
after which, according to equation~\eqref{eq:monodromy:detG}, $g_{12}\neq0$. Now, equation (B.3) of \cite{KitVar2023} implies
that $s_0^0=-s_0^{\infty}s_1^{\infty}=0$. Substituting $s_0^0=0$ into equations~\eqref{eq:monodromy:s0} and
\eqref{eq:monodromy:s1}, we find that $s_0^{\infty}=g_{11}^2-g_{21}^2$ and $s_1^{\infty}=0$. A staightforward
calculation using equation~\eqref{eq:monodromy:detG} shows that $s_0^{\infty}g_{12}^2=g_{11}g_{22}+g_{12}g_{21}$.
According to the conjecture, we must put
\begin{equation}\label{eq:monodromy+i/2calc}
\varpi_1(1/4)\varpi_2(1/4)=16\pi\me^{\pi\mi/4}\quad
\Rightarrow\quad
32(2\pi\varepsilon b)^{1/2}(g_{11}g_{22}+g_{12}g_{21})=16\pi\me^{\pi\mi/4}.
\end{equation}
The last equation in \eqref{eq:monodromy+i/2calc} is obtained with the help of equations (B.6)--(B.8) of \cite{KitVar2023}.
It implies the formula for $s_0^{\infty}g_{12}^2$ stated in \eqref{eqs:i/2mondata}. To derive the remaining formulae for
the monodromy data in the list \eqref{eqs:i/2mondata}, one has to supplement the last equation
in \eqref{eq:monodromy+i/2calc} with \eqref{eq:monodromy:detG}, and solve the resulting linear system with respect
to $g_{11}g_{22}$ and $g_{12}g_{21}$.
\hfill $\blacksquare$\end{remark}
We now continue with the proof. In view of Remark~\ref{rem:lemma-proof}, until such time as a better estimate for $\delta$
in the asymptotics~\eqref{eq:u-asympt0-generic} is made available, we can only use it in the case where
$\varpi_{1}(-\varrho)\varpi_{2}(-\varrho)\neq0$. Towards this end, we consider a B\"acklund transformation
relating solutions of equation~\eqref{eq:dp3} for $a=\mi/2$ and $a=-\mi/2$. The B\"acklund transformation for
equation~\eqref{eq:dp3} was obtained by Gromak \cite{Gromak1973}; we, however, need a realization of this transformation as
the Schlesinger transformation of the corresponding Fuchs-Garnier pair in order to make the connection with the monodromy data
of the solutions. This realization is done in Subsection 6.1 of \cite{KitVar2004}.

Denote by $\hat{u}(\tau)$ a solution of equation~\eqref{eq:dp3} for $a=-\mi/2$: its monodromy data is also labeled `hats'.
For $a=\mi/2$, the B\"acklund transformation reads
\begin{equation}\label{eq:Baecklund-solutions}
\hat{u}(\tau)=\frac{\varepsilon b\tau}{8u^2(\tau)}\left(\mi u'(\tau)+b\right).
\end{equation}
This transformation acts on the monodromy manifold as follows:
\begin{equation}\label{eq:Baecklund-monodromy}
(\mi/2, s_0^0, s_0^{\infty} , s_1^{\infty} , g_{11}, g_{12}, g_{21}, g_{22})\rightarrow
(-\mi/2,-s_0^0, s_0^{\infty} , s_1^{\infty} , \mi g_{11}, \mi g_{12},-\mi g_{21},-\mi g_{22}).
\end{equation}
This action can also be considered as the mapping $\mathscr{M}_{\mi/2}\to\mathscr{M}_{-\mi/2}$.
Our notational agreement means that $\hat{s}_0^0=-s_0^0$, $\hat{g}_{11}=\mi g_{11}$, etc.

Substituting the expansion~\eqref{eq:uSuleimanov:general} into \eqref{eq:Baecklund-solutions}, one finds
\begin{equation}\label{eq:hat-u-neq0-at0}
\hat{u}(\tau)\underset{\tau\to0}{=}\varepsilon\left(\frac14\tilde{c}_1+
\left(\frac{3}{8}\tilde{c}_2-\frac12\tilde{c}_1^2\right)\tau+
\left(\frac12\tilde{c}_3+\frac34\tilde{c}_1^3-\frac54\tilde{c}_1\tilde{c}_2\right)\tau^2+\ldots\right),
\end{equation}
where the coefficients $\tilde{c}_k$ are given by equations~\eqref{eq:c-m-tilde} and \eqref{eq:c2-c4tilde} with $\kappa=+1$!
Note that the monodromy data for the case $\tilde{c}_1=0$, the Suleimanov solution, was calculated in \cite{KitSIGMA2019};
in this case, equations~\eqref{eqs:i/2mondata} are confirmed. Until the end of the proof, we assume that $\tilde{c}_1\neq0$.
Under this assumption, the function $\hat{u}(\tau)$ defined by the Taylor-series expansion~\eqref{eq:hat-u-neq0-at0} is the
meromorphic solution of equation~\eqref{eq:dp3} that is non-vanishing at the origin. Now, we can equate the leading terms
as $\tau\to0$ of the asymptotics~\eqref{eq:u-asympt0-generic} and the expansion~\eqref{eq:hat-u-neq0-at0} for $a=-\mi/2$:
\begin{equation}\label{eq:tilde-c1-varpi}
\varrho=1/4,\qquad
\frac{\varepsilon\tilde{c}_1}{4}=\frac{b\me^{-\pi\mi/4}}{16\pi}\varpi_1(-1/4)\varpi_2(-1/4).
\end{equation}
Substituting for $\varpi_1(-1/4)$ and $\varpi_2(-1/4)$ in the second equation of \eqref{eq:tilde-c1-varpi} the corresponding
expressions (B.6)--(B.8) of \cite{KitVar2023}, we, after straightforward simplifications, find that
\begin{equation}\label{eq:tilde-c1-mondata-hat}
\tilde{c}_1=2^{1/2}\pi^{-1/2}(\varepsilon b)^{1/2}\me^{-\pi\mi/4}(\hat{g}_{11}+\hat{g}_{21})(\hat{g}_{12}+\hat{g}_{22}).
\end{equation}
Equation (B.3) of \cite{KitVar2023}, together with the condition $\varrho=1/4$, implies that
$\hat{s}_0^0=\hat{s}_0^{\infty}\hat{s}_1^{\infty}=0$. Substituting $\hat{s}_0^0=0$ and $a=-\mi/2$ into
equation~\eqref{eq:monodromy:main} and summing it up with equation~\eqref{eq:monodromy:detG}, we can factor the resulting
equation:
\begin{equation}\label{eq:hat-g-factor}
(\hat{g}_{22}-\hat{g}_{12})(\hat{g}_{11}+\hat{g}_{21})=0.
\end{equation}
Since, according to \eqref{eq:tilde-c1-mondata-hat}, $\hat{g}_{11}+\hat{g}_{21}\neq0$, we obtain
\begin{equation}\label{eq:hat-g22g12}
\hat{g}_{22}=\hat{g}_{12};
\end{equation}
thus, $\hat{s}_1^{\infty}=0$ and $\hat{s}_0^{\infty}=\hat{g}_{21}^2-\hat{g}_{11}^2$ (see equations~\eqref{eq:monodromy:s1}
and \eqref{eq:monodromy:s0}, respectively). To complete the proof of equations~\eqref{eqs:i/2mondata}, it is enough to apply
the following identities
\begin{gather*}
\hat{s}_0^{\infty}\hat{g}_{12}^2=\hat{g}_{12}^2(\hat{g}_{21}^2-\hat{g}_{11}^2)=
-(\hat{g}_{11}^2\hat{g}_{22}^2-\hat{g}_{12}^2\hat{g}_{21}^2)=-(\hat{g}_{11}\hat{g}_{22}+\hat{g}_{12}\hat{g}_{21}),\\
(\hat{g}_{11}+\hat{g}_{21})(\hat{g}_{12}+\hat{g}_{22})=2\hat{g}_{22}(\hat{g}_{11}+\hat{g}_{21})=
2(\hat{g}_{11}\hat{g}_{22}+\hat{g}_{12}\hat{g}_{21}),\\
\hat{s}_0^{\infty}\hat{g}_{12}^2=-s_0^{\infty}g_{12}^2,\qquad
\hat{g}_{1k}\hat{g}_{2l}=g_{1k}g_{2l},\quad
k,l=1,2,
\end{gather*}
to equations~\eqref{eq:tilde-c1-mondata-hat} and \eqref{eq:monodromy:detG}.
\end{proof}
\begin{remark}\label{rem:g11g22=0}
Strictly speaking, in Theorem B.1 of \cite{KitVar2023}, as well as in Theorem 3.4 of \cite{KitVar2004}, monodromy data
for which $g_{11}g_{22}=0$ are excluded; however, the formulae \eqref{eqs:i/2mondata} and \eqref{eqs:-i/2mondata} remain valid
even for those values of $\tilde{c}_1$ such that $g_{11}g_{22}=0$. This follows from the fact that the correspondence between
$\tilde{c}_1$ and the monodromy data is given by a one-to-one analytic map.
\hfill $\blacksquare$\end{remark}
%%%%%%%%%%%%%%%%%%%%%%%%%%%%%%%%%%%%%%%%%%%%%%%%%%%%%%%%%%%%%%%%%%%%%%%%%%%%%%%%%%%%%%%%%%
%%%%%%%%%%%%%%%%%%%%%%%%%%%%%%%%%%%%%%%%%%%%%%%%%%%%%%%%%%%%%%%%%%%%%%%%%%%%%%%%%%%%%%%%%%
\appendix
\section{Appendix: Examples of Generic Asymptotic Behaviour as $\tau\to+\infty$ of Meromorphic Solutions for $a=\pm\mi/2$}
\label{app:pictures}
Here, we present some examples of the application of Lemma~\ref{lem:mondata-u-pm-id2} for describing the large-$\tau$
asymptotics of solutions of equation~\eqref{eq:dp3} for $a=\pm\mi/2$. Generic asymptotic behaviour means that the monodromy
data satisfy the condition $g_{11}g_{12}g_{21}g_{22}\neq0$ and $|g_{11}g_{22}|\neq-g_{11}g_{22}$. In this case, large-$\tau$
asymptotics for the solution $u(\tau)$ can be found with the help of Theorems~C.1 and C.2 of \cite{KitVar2023}. Note that,
according to Lemma~\ref{lem:mondata-u-pm-id2}, for $a=\mi/2$, there are two values of the parameter $\tilde{c}_1$
when either $g_{11}=0$ or $g_{21}=0$; and, for $a=-\mi/2$, there are also two values of $\tilde{c}_1$ when
either $g_{22}=0$ or $g_{12}=0$. For such values of $\tilde{c}_1$ large-$\tau$ asymptotics are also known; they correspond
to the truncated behaviour of the solutions, that is, solutions that admit pole-free analytic continuation into certain
sectors of the neighbourhood of the point at infinity: we do not consider such solutions here.
Thus, holomorphicity of solutions at $\tau=0$ imposes rather weak restrictions
on their behaviour at the point at infinity: if $a=\mi/2$, then $g_{22}=-g_{12}\neq0$, and, if $a=-\mi/2$, then
$g_{11}=-g_{21}\neq0$.

The calculations presented below are performed with {\sc Maple} 2022. Our initial data are specified at $\tau=0$,
where equation~\eqref{eq:dp3} has a singularity. In this version of {\sc Maple}, a standard command for solving ODEs
correctly determines that our initial conditions satisfy a corresponding algebraic constraint, and thus automatically
solves equation~\eqref{eq:dp3} without necessitating any additional tricks on the part of the end user.
At the same time, the same program that worked with {\sc Maple} 2022 did not work with {\sc Maple} 2017, because
{\sc Maple} 2017 was not able to correctly determine the special constraint mentioned above; stranger still is the fact that
the program works flawlessly with {\sc Maple} 16! In the case of problems like we had with {\sc Maple} 2017, the reader is
encouraged to see how we dealt with them in \cite{KitVar2023}.

Hereafter, we consider three examples of generic asymptotics: the first two examples correspond to $a=\mi/2$, while the last
example corresponds to $a=-\mi/2$. In all three examples, we set $\varepsilon=+1$ and $b=1/2$. In each of the figures below,
we present either the real part of the solution and its corresponding asymptotics, or the imaginary part of the solution and
its corresponding asymptotics. The real and imaginary parts of the solutions are obtained via standard {\sc Maple} commands
for the graphical presentation of the solutions of ODEs, whilst the corresponding asymptotics are obtained by substituting the
monodromy data from Lemma~\ref{lem:mondata-u-pm-id2} into the asymptotics given in Theorems C.1 and C.2 of \cite{KitVar2023}.
Since we consider large-$\tau$ asymptotics, we're supposed to compare solutions and asymptotics over large segments of $\tau$.
On large-$\tau$ segments, however, the plots are automatically scaled in order to fit a standard page, and, as a result of
this scaling, two plots merge into one curve, so that,
sometimes, even coloured plots are hard to distinguish; therefore, we choose the range of $\tau$ over which we compare our
solutions with the asymptotics such that, on the one hand, it is clear that both plots are close enough, and, on the other hand,
one can see a clear distinction between the plots. In general, the correspondence between the solution and its asymptotics is
worse over the same distances for large values of $\tilde{c}_1$ than for smaller values of $\tilde{c}_1$; therefore, when
choosing larger values of $\tilde{c}_1$, one has to consider larger segments of $\tau$
for the comparison of the plots in order to observe that they actually approach one another. The parameter $b$ in
equation~\eqref{eq:dp3} is responsible for the scaling of $\tau$, so that, instead of considering large distances, one can
simply increase the value of $b$.

The domains of validity of the asymptotics given in Theorems C.1 and C.2 of \cite{KitVar2023} overlap. Which of these
asymptotics better approximates a particular solution is regulated by the value of the parameter
$\nu+1=\frac{\mi}{2\pi}\ln(g_{11}g_{22})$, more precisely, by the value of $\mathrm{Re}\,\nu+1$. In case $\nu+1$ is purely
imaginary, the asymptotics given in Theorem C.2 is not applicable; so, in the first example (see
Figs.~\ref{fig:=dp3+id2u0+1v0-1l50Reu} and \ref{fig:dp3+id2u0+1v0-1l100Imu}), we have chosen $\tilde{c}_1=1-\mi$ such that
$\nu+1=\mi0.0189800\ldots$.
\begin{figure}[htpb]
\begin{center}
\includegraphics[height=50mm,width=100mm]{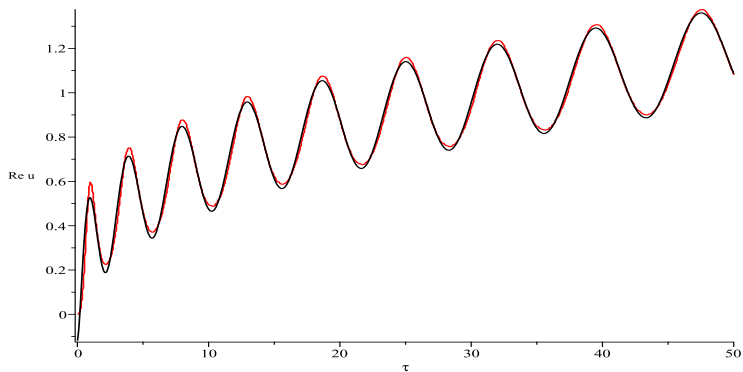}
\caption{The red and black plots are, respectively, the real parts of the numerical and large-$\tau$ asymptotic values
of the function $u(\tau)$ for $\tau\in(0,50)$ corresponding to the parameters $a=\mi/2$ and $\tilde{c}_1=1-\mi$.}
\label{fig:=dp3+id2u0+1v0-1l50Reu}
\end{center}
\end{figure}
\begin{figure}[htpb]
\begin{center}
\includegraphics[height=50mm,width=100mm]{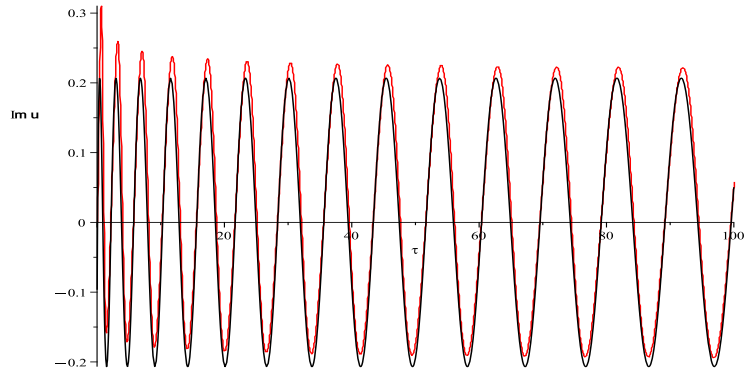}
\caption{The red and black plots are, respectively, the imaginary parts of the numerical and large-$\tau$ asymptotic values
of the function $u(\tau)$ for $\tau\in(0,100)$ corresponding to the parameters $a=\mi/2$ and $\tilde{c}_1=1-\mi$.}
\label{fig:dp3+id2u0+1v0-1l100Imu}
\end{center}
\end{figure}

In the second example, we also consider the case $a=\mi/2$; however, here, $\tilde{c}_1=-2+\mi6$, so that the parameter
$\nu+1=0.58885657\ldots+\mi0.13705579\ldots$, which means that we have to use the asymptotic formula from Theorem C.2 of
\cite{KitVar2023} since the asymptotics given in Theorem C.1 is not applicable. The results of the corresponding calculations
are presented in Figs.~\ref{fig:=dp3+id2u0-2v0+6l50Reu} and \ref{fig:dp3+id2u0-2v0+6l50Imu}. This example is especially
interesting from the point of view of visualization: it turns out that the first ``peak'' of the asymptotic plot
for $\mathrm{Re}\,u(\tau)$ extends from about $-13150$ to $2050$ along the $y$-axis, while the subsequent peaks have
more adequate heights; as clearly seen in Figure~\ref{fig:=dp3+id2u0-2v0+6l50Reu}, they are close to the heights of the
corresponding peaks of the numerical solution. The situation with the first imaginary ``up-down'' peak is even worse;
its height ranges from about $-60000$ to $62000$ along the $y$-axis.
As a result, the {\sc Maple} graphical program automatically scales the asymptotic
plots such that the first peaks fit within the frame of the figures, while the other peaks, which have normal sizes prior to
the scaling, resemble Lilliputians; then, when we plot the numerical and asymptotic graphs on one figure, we see that the
asymptotics are not correlated with the numerical solution, so that the initial visual impression indicates that the asymptotic
formula is wrong. When, however, the situation with the first peak, which is located on the interval (0.8,0.9), is clarified,
it becomes obvious that if we construct an asymptotic graph starting at $\tau=1$, then everything falls into place.
\begin{figure}[htpb]
\begin{center}
\includegraphics[height=50mm,width=100mm]{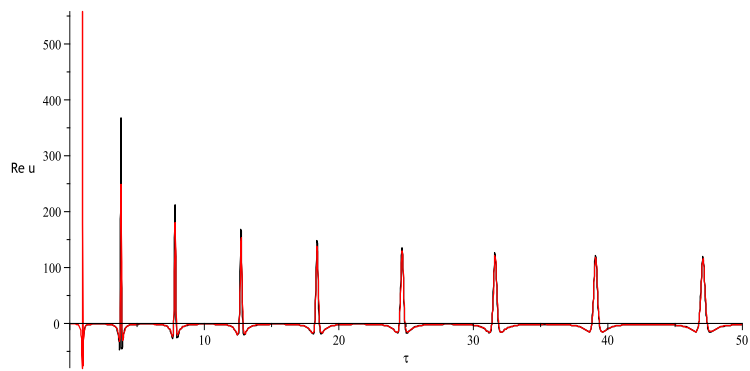}
\caption{The red plot is the real part of the numerical solution $u(\tau)$ corresponding to the parameters $a=\mi/2$ and
$\tilde{c}_1=-2+\mi6$ for $\tau\in(0,50)$. The black plot is the real part of the large-$\tau$ asymptotics of
$u(\tau)$ for $\tau\in(1,50)$.}
\label{fig:=dp3+id2u0-2v0+6l50Reu}
\end{center}
\end{figure}
\begin{figure}[htpb]
\begin{center}
\includegraphics[height=50mm,width=100mm]{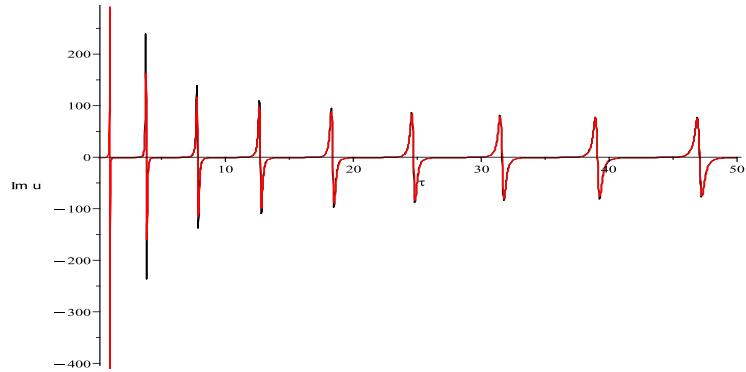}
\caption{The red plot is the imaginary part of the numerical solution $u(\tau)$ corresponding to the parameters $a=\mi/2$ and
$\tilde{c}_1=-2+\mi6$ for $\tau\in(0,50)$. The black plot is the imaginary part of the large-$\tau$ asymptotics of
$u(\tau)$ for $\tau\in(1,50)$.}
\label{fig:dp3+id2u0-2v0+6l50Imu}
\end{center}
\end{figure}

Finally, in the third example, we consider the case $a=-\mi/2$. Here, the parameter $\tilde{c}_1=-3-\mi2$, which implies that
$\nu+1=0.5454729\ldots+\mi0.0189800\ldots$. The value of $\mathrm{Re}\,\nu+1$ shows that, in order to find large-$\tau$
asymptotics of the corresponding solution $u(\tau)$, we have to apply Theorem C.2 of \cite{KitVar2023}; the results of these
calculations are presented in Figs.~\ref{fig:=dp3-id2u0-3v0-2l50Reu}--\ref{fig:dp3-id2u0-3v0-2l10Imu}. In this example,
we do not encounter any surprises; rather, we demonstrate a different type of behaviour for the function $u(\tau)$.
On the scale of Figs.~\ref{fig:=dp3-id2u0-3v0-2l50Reu} and \ref{fig:dp3-id2u0-3v0-2l50Imu}, the numerical and asymptotic plots
coincide almost exactly; as a result, we produced two close-up figures, namely, Figs.~\ref{fig:=dp3-id2u0-3v0-2l10Reu} and
\ref{fig:dp3-id2u0-3v0-2l10Imu}, so that the reader can evaluate the quality of the
approximation of the numerical solution by its large-$\tau$ asymptotics on small distances.
\begin{figure}[htpb]
\begin{center}
\includegraphics[height=50mm,width=100mm]{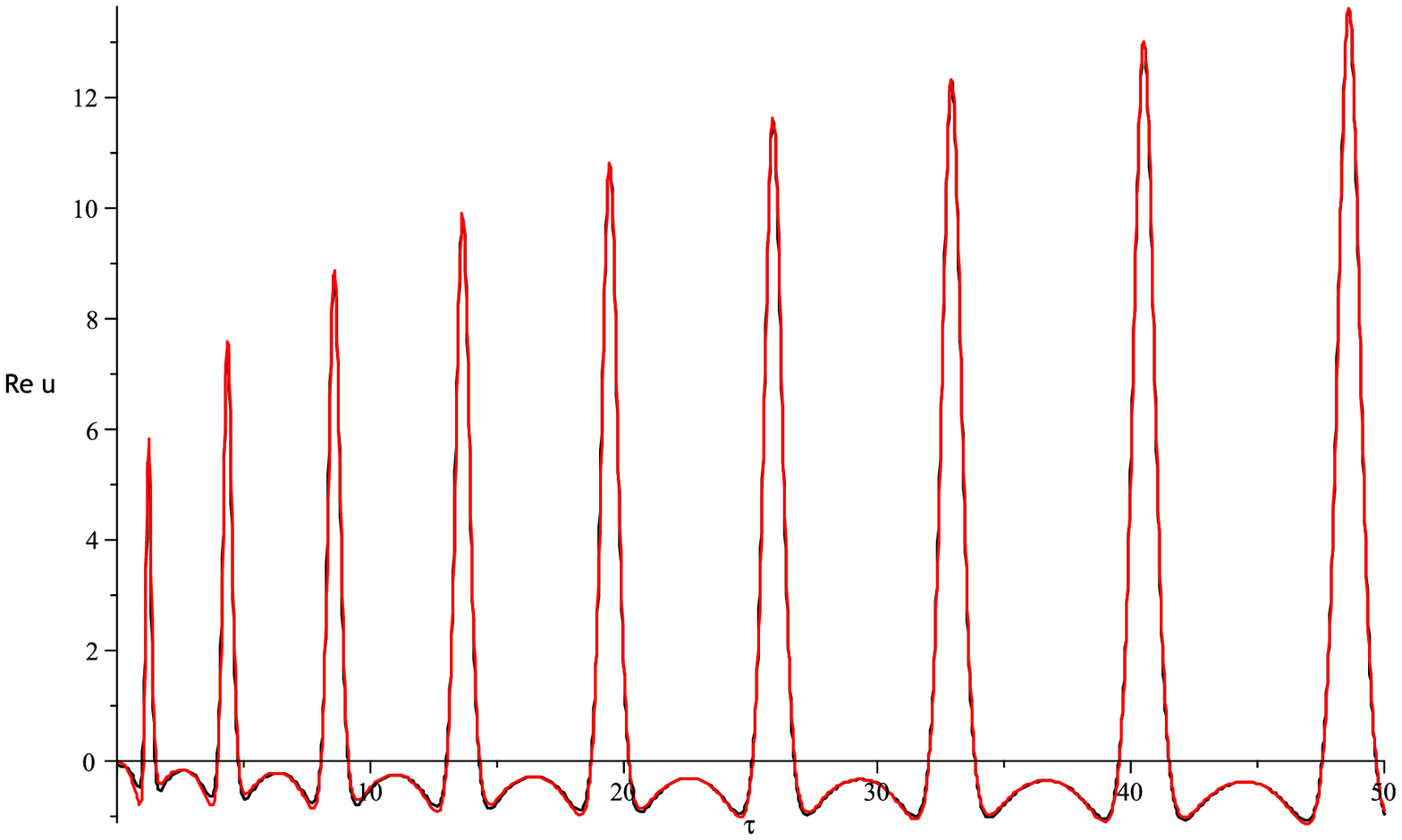}
\caption{The red and black plots are, respectively, the real parts of the numerical and large-$\tau$ asymptotic values
of the function $u(\tau)$ for $\tau\in(0,50)$ corresponding to the parameters $a=-\mi/2$ and $\tilde{c}_1=-3-\mi2$.}
\label{fig:=dp3-id2u0-3v0-2l50Reu}
\end{center}
\end{figure}
\begin{figure}[htpb]
\begin{center}
\includegraphics[height=50mm,width=100mm]{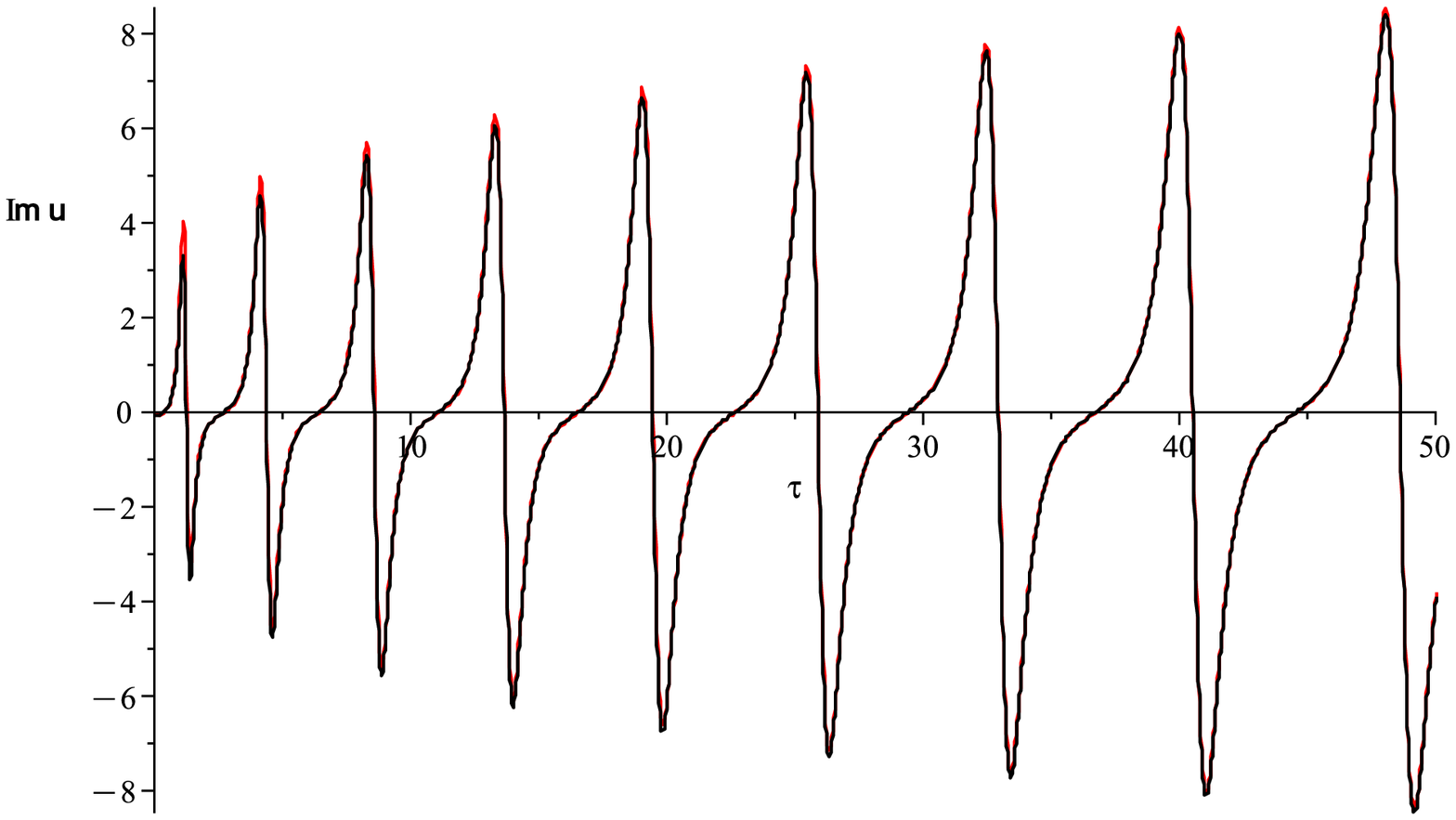}
\caption{The red and black plots are, respectively, the imaginary parts of the numerical and large-$\tau$ asymptotic values
of the function $u(\tau)$ for $\tau\in(0,50)$ corresponding to the parameters $a=-\mi/2$ and $\tilde{c}_1=-3-\mi2$.}
\label{fig:dp3-id2u0-3v0-2l50Imu}
\end{center}
\end{figure}
\begin{figure}[htpb]
\begin{center}
\includegraphics[height=50mm,width=100mm]{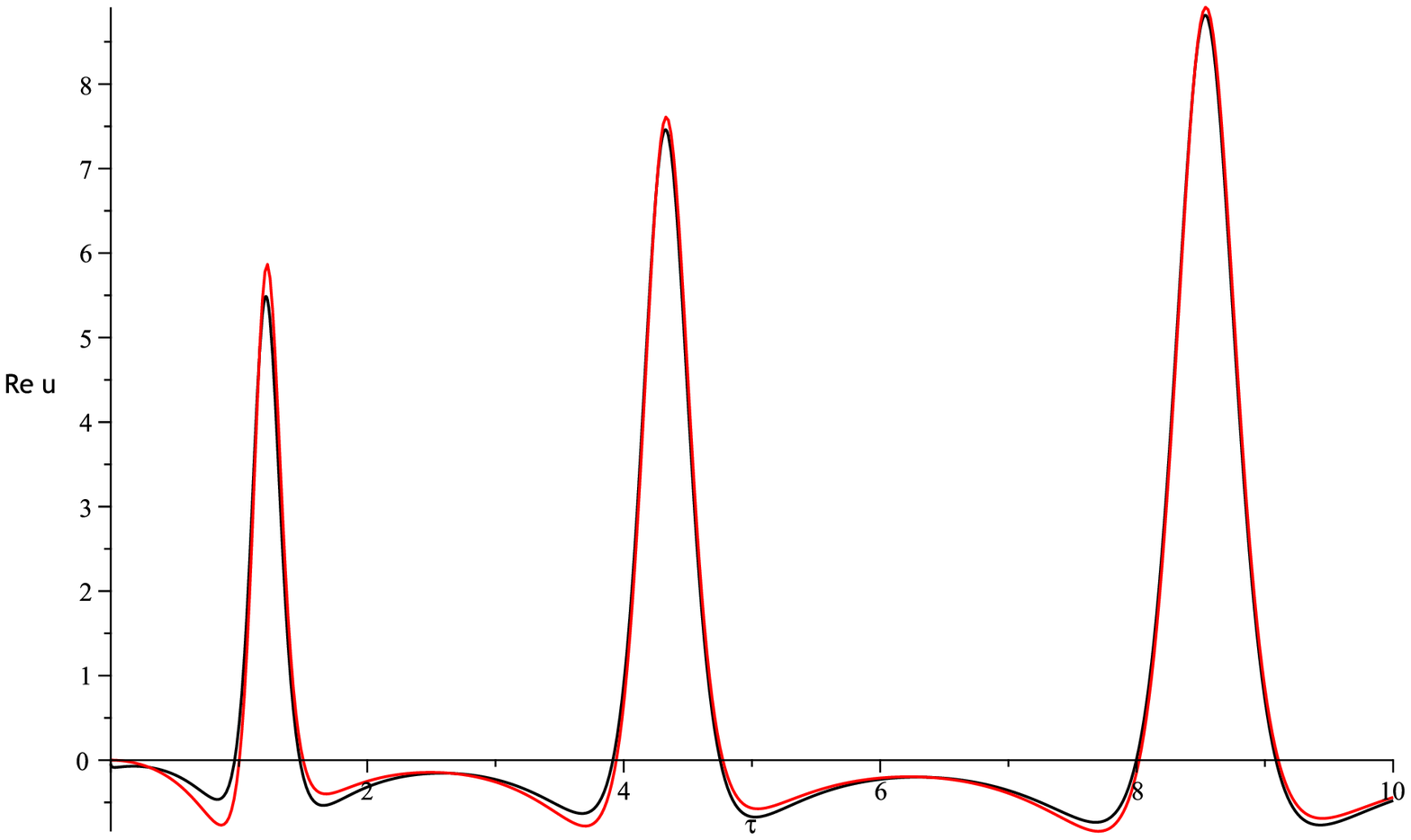}
\caption{The red and black plots are, respectively, the real parts of the numerical and large-$\tau$ asymptotic values
of the function $u(\tau)$ for $\tau\in(0,10)$ corresponding to the parameters $a=-\mi/2$ and $\tilde{c}_1=-3-\mi2$.}
\label{fig:=dp3-id2u0-3v0-2l10Reu}
\end{center}
\end{figure}
\begin{figure}[htpb]
\begin{center}
\includegraphics[height=50mm,width=100mm]{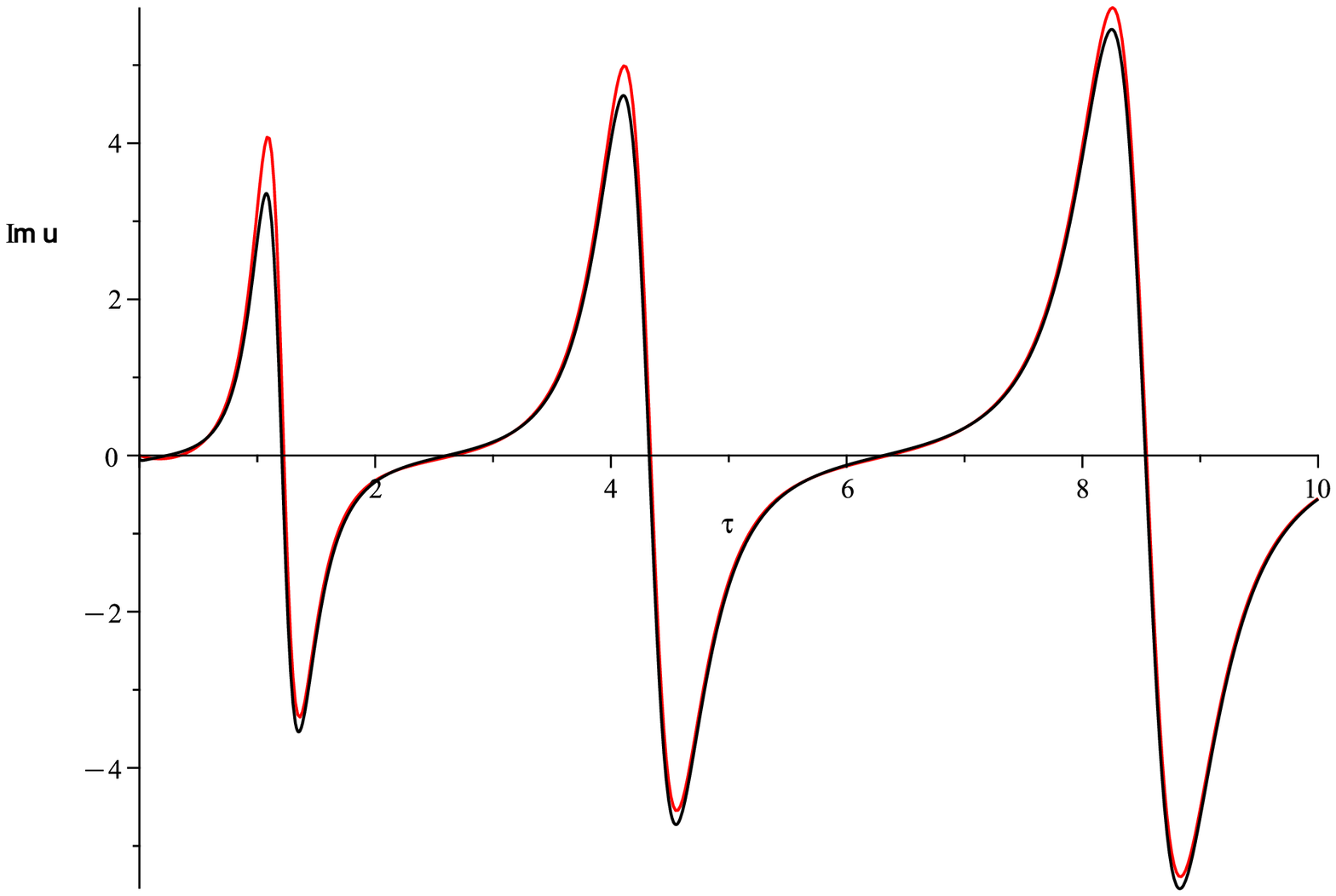}
\caption{The red and black plots are, respectively, the imaginary parts of the numerical and large-$\tau$ asymptotic values
of the function $u(\tau)$ for $\tau\in(0,10)$ corresponding to the parameters $a=-\mi/2$ and $\tilde{c}_1=-3-\mi2$.}
\label{fig:dp3-id2u0-3v0-2l10Imu}
\end{center}
\end{figure}
\pagebreak
%%%%%%%%%%%%%%%%%%%%%%%%%%%%%%%%%%%%%%%%%%%%%%%%%%%%%%%%%%%%%%%%%%%%%%%%%%%%%%%%%%%%%%%%%%%%%%%%%%%%%%%%%%%
%%%%%%%%%%%%%%%%%%%%%%%%%%%%%%%%%%%%%%%%%%%%%%%%%%%%%%%%%%%%%%%%%%%%%%%%%%%%%%%%%%%%%%%%%%%%%%%%%%%%%%%%%%%

\vspace*{0.35cm}
\noindent
\textbf{\Large Acknowledgements}
\vspace*{0.25cm}

\noindent
A.~V. is grateful to the St.~Petersburg Department of the Steklov Mathematical Institute (PDMI)
for hospitality during the preparation of this work.
%\clearpage

\end{document}